\newtheorem{teo}{Theorem}[section]
\newtheorem{coro}[teo]{Corollary}
\newtheorem{defn}[teo]{Definition}
\newtheorem{lem}[teo]{Lemma}
\newtheorem{pro}[teo]{Proposition}
\newtheorem{prob}[teo]{Problem}
\newcommand{\B}{\mathbb{B}}
\newcommand{\HH}{\mathbb{H}}
\newcommand{\s}{\mathbb{S}}
\newcommand{\rr}{\mathbb{R}}
\DeclareMathOperator{\IIm}{Im}
\DeclareMathOperator{\RRe}{Re}
\DeclareMathOperator{\ext}{ext}
\title{ \bf The Bohr Theorem for slice regular functions}
\date{}
\author{Chiara Della Rocchetta \footnote{The three authors acknowledge the support of G.N.S.A.G.A. of INdAM and MIUR (Research Project ``Propriet\`a geometriche delle variet\`a reali e complesse'')}\\
\normalsize Dipartimento di Matematica ``U. Dini'', Universit\`a di Firenze \\
\normalsize Viale Morgagni 67/A, 50134 Firenze, Italy, mchidel@gmail.com \\
\and Graziano Gentili $^*$ \\
\normalsize Dipartimento di Matematica ``U. Dini'', Universit\`a di Firenze \\
\normalsize Viale Morgagni 67/A, 50134 Firenze, Italy,  gentili@math.unifi.it \\
\and Giulia Sarfatti $^*$\\
\normalsize Dipartimento di Matematica ``U. Dini'', Universit\`a di Firenze \\
\normalsize Viale Morgagni 67/A, 50134 Firenze, Italy,  sarfatti@math.unifi.it \\
}
\begin{document}

\maketitle                   % Produces the title.

\begin{abstract}
In this paper we prove the Bohr Theorem for slice regular functions. Following the historical path that led to the proof of the classical Bohr Theorem, we also extend the Borel-Carath\'eodory Theorem to the new setting.
\end{abstract}

\vskip 0.5 cm
 
{\bf keywords:} Functions of a quaternionic variable, Bohr Theorem

{\bf Mathematics Subject Classification (2010):} 30G35, 30B10, 30C99

\section{Preface} 

The search for quaternionic analogs of the class of complex holomorphic functions has produced, in approximately a Century, a variety of definitions having different approaches. These definitions accompany the most famous and successful one due to Fueter, \cite{fueter1, fueter2}, that has produced the well recognized theory of Fueter regular functions (see, e.g., the nice survey \cite{Sudbery}, the more recent \cite{libro daniele}, and references therein). There is a recent definition inspired by Cullen, \cite{Cullen}, and given in its full generality by Gentili and Struppa in \cite{G.S., GSAdvances}, that has the advantage - when compared with Fueter's definition - to include in the class of regular functions the natural polynomials and power series of the form $\sum_{n=0}^\infty q^na_n$, with $a_n$ belonging to the skew field $\mathbb{H}$ of quaternions. This definition, presented in detail in the Preliminaries, originated the theory of slice regular functions, that is already well established and fast developing also in the more general setting of Clifford Algebras, where it produced interesting and deep applications (see e.g. \cite{monogenic, libro2}),   Slice regular functions have properties that are typical, and in some sense characterizing, of the class of complex holomorphic functions. Just to mention a few of the basic results that hold for slice regular functions (sometimes in a peculiarly different way with respect to complex  holomorphic functions), we recall the power and Laurent series expansion, the Cauchy and Pompeiu Representation Formulas, the Cauchy estimates, the Maximum (and Minimum) Modulus principle, the Identity Principle, and the Open Mapping Theorem (\cite{kernel, pompeiu, ext, open, zeri, power, CaterinaCV, singularities}).

Holomorphic functions of one complex variable have certain deep geometric properties, that greatly contribute to the beauty of their theory. Some of these properties concern the geometry of the image, or inverse image, of the unit ball of $\mathbb{C}$ through holomorphic functions. One of the classical results in this setting is due to Bohr, who, while studying questions of Diophantine approximation, encountered the following:
\begin{prob}\label{Pb}
Let $x\in (0,1)$ be a real number. Establish whether it is possible to find a power series $\sum_{n= 0}^\infty  a_nz^n$, with $a_n \in \mathbb{C}$, such that:
\begin{enumerate}
\item $f(z)=\sum_{n= 0}^\infty  a_nz^n$ is holomorphic for $|z|<1$ and continuous for $|z|\le1$;
\item $|f(z)|<1$ for $|z|\le1$;
\item $\sum_{n= 0}^\infty  x^n|a_n|>1$.
\end{enumerate}
\end{prob}
%Bohr himself solved this problem by proving the following result, whose geometric flavor is very deep (see \cite{Bohr}).
\noindent Bohr himself presented in \cite{Bohr} first a partial solution of Problem \ref{Pb} and then the complete result, known as Bohr Theorem, relating the proof due to Wiener.

\begin{teo}[Bohr, complex case]
Let 
$$f(z)=\sum_{n= 0}^\infty  a_nz^n$$ be holomorphic for $|z|<1$, continuous for $|z|\le1$, and let $|f(z)|<1$ for all $|z|\le 1$. Then
$$
\sum_{n= 0}^\infty  |a_nz^n| <1
$$
for $|z|\le \frac{1}{3}$.
Moreover $\frac{1}{3}$ is the largest radius for which the statement is true.
\end{teo}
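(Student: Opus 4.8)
The plan is to isolate a sharp coefficient estimate and then reduce the theorem to an elementary one-variable optimization. First I would prove that, for a holomorphic self-map of the unit disk $f=\sum_{n\ge0}a_nz^n$, the coefficients satisfy $|a_n|\le 1-|a_0|^2$ for every $n\ge 1$. The key device is an averaging over $n$-th roots of unity: writing $\omega=e^{2\pi i/n}$ and $F(z)=\frac1n\sum_{j=0}^{n-1}f(\omega^j z)$, only the coefficients whose index is a multiple of $n$ survive, so that $F(z)=a_0+a_nz^n+a_{2n}z^{2n}+\cdots$, and $|F|<1$ on the open disk because $F$ is an average of values of $f$. Setting $F(z)=G(z^n)$ defines a holomorphic self-map $G$ of the disk with $G(0)=a_0$ and $G'(0)=a_n$; the Schwarz--Pick inequality $|G'(0)|\le 1-|G(0)|^2$ then gives exactly $|a_n|\le 1-|a_0|^2$.

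With this estimate in hand, I would set $a=|a_0|<1$ and, for $|z|=r\le\tfrac13$, bound
\[
\sum_{n=0}^\infty|a_n|\,r^n\le a+(1-a^2)\sum_{n=1}^\infty r^n=a+(1-a^2)\frac{r}{1-r}.
\]
Since $r\mapsto r/(1-r)$ is increasing, the right-hand side is largest at $r=\tfrac13$, where it equals $a+\tfrac12(1-a^2)=1-\tfrac12(1-a)^2$. Because $a<1$ this is strictly less than $1$, which proves $\sum_n|a_nz^n|<1$ on the closed disk of radius $\tfrac13$.

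For the sharpness of $\tfrac13$ I would test the disk automorphisms $\phi_a(z)=\frac{a-z}{1-az}$ with $a\in(0,1)$. A direct expansion gives $|a_0|=a$ and $|a_n|=a^{n-1}(1-a^2)$ for $n\ge1$, whence $\sum_{n}|a_n|\,r^n=a+\frac{(1-a^2)r}{1-ar}$; this exceeds $1$ precisely when $r>\frac{1}{1+2a}$, and $\frac{1}{1+2a}\to\tfrac13$ as $a\to1^-$. Thus for any $r>\tfrac13$ a suitable $\phi_a$ violates the conclusion. The only technical point to address is that $\phi_a$ has modulus $1$ on the boundary rather than strictly less; I would repair this by replacing $\phi_a$ with $\rho\,\phi_a$ for $\rho<1$ close to $1$ (or by a limiting argument), which keeps the sum above $1$ while restoring the strict bound $|f|<1$ on $|z|\le1$.

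I expect the main obstacle to be the coefficient estimate: the averaging-plus-Schwarz--Pick argument is what makes the constant $\tfrac13$ emerge cleanly, and every subsequent step is elementary once $|a_n|\le1-|a_0|^2$ is available. The subtlety in the sharpness part is only the boundary normalization, which is routine.
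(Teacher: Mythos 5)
Your proof is correct and follows essentially the same route as the paper: the paper's proof of the quaternionic Bohr theorem in Section 4 uses exactly your Wiener-type averaging over $n$-th roots of unity to reduce to the coefficient bound $|a_n|\le 1-|a_0|^2$, and establishes sharpness with the same family of M\"obius-type maps ($\varphi(q)=(1-qa)^{-1}(1-q)$ there, rescaled by a constant in $(0,1)$, versus your $\rho\,\phi_a$). The only cosmetic difference is that where you invoke Schwarz--Pick at the origin, the paper (lacking that tool in the quaternionic setting) constructs the regular quotient $(1-f(q)a_0)^{-*}*\bigl(q^{-1}(f(q)-a_0)\bigr)$ and applies the Maximum Modulus Principle, which amounts to the same estimate in the complex case.
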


For different reasons, recent studies of various authors are dedicated to generalize the Bohr Theorem to new settings, for instance to the case of holomorphic functions of several complex variables (see e.g. \cite{ A1, A2, Boas, Djakov, P}). In \cite{Kglu}, weighted Laplace-Beltrami operators associated with the hyperbolic metric of the unit ball in $\mathbb{C}^n$ are considered:  the author studies a Bohr phenomenon on the spaces of their solutions.The case of classical monogenic functions is treated in \cite{GM, GM1, GM2, M}.

In this paper we prove the Bohr Theorem for slice regular functions of one quaternionic variable. This result contributes to the construction of a geometric theory of slice regular functions, that reveals to posses a richness similar to that of holomorphic functions. We follow the historical approach to the Bohr Theorem, and first of all we prove the analog of the celebrated Borel-Carath\'eodory Theorem, that certainly has its own independent interest. Due to the peculiarities of the quaternionic setting, this proof requires a certain technical effort. We then establish the weak version of the Bohr theorem, and then proceed to the proof of its sharp version following the guidelines due to Wiener and used by Bohr in \cite{Bohr}. Recent new approaches to the study of quaternionic rational transformations (\cite{cinzia}) and new results on regular quaternionic rational transformations (\cite{regular}) are heavily used to obtain our results. We believe that these achievements may also help to open new perspectives in the investigation concerning quaternionic regular Dirichlet series.

The paper is organized as follows. After Section 2, dedicated to the necessary preliminaries on slice regular functions, in Section 3 we prove the Borel-Carath\'eodory Theorem and a weak version of the Bohr Theorem. The sharp version of the Bohr Theorem is proved in Section 4.

\section{Preliminaries}

Let $\HH$ be the skew field of quaternions. Each element $q$ of $\HH$ is of the form $q= x_0 +x_1i+x_2j+x_3k$, where the $x_l$ are real numbers and $i,j,k$ satisfy the relations
$$i^2=j^2=k^2=-1,\; ij=k=-ji,\; jk=i=-kj,\; ki=j=-ik.$$
The {\em real} and {\em imaginary} part of a quaternion $q$ are defined as $\RRe(q)=x_0$ and $\IIm(q)=x_1i +x_2j +x_3k$, its {\em conjugate} as $\bar{q}=\RRe(q)-\IIm(q)$ and its {\em modulus} by $|q|^2=q\bar{q}=\RRe(q)^2+|\IIm(q)|^2$. The multiplicative inverse of each $q\neq 0$ is then $q^{-1}=\frac{\bar{q}}{|q|^2}$.
Notice that for all $q \in \HH$ that is not real, $\frac{\IIm(q)}{|\IIm(q)|}$ defines an imaginary unit, i.e. a quaternion whose square equals $-1$. Then every $q\in \HH$ can be written as $q=x+yI$ where $x,y \in \rr$ and $I$ is an element of the unit $2$-sphere of purely imaginary quaternions,
$$\mathbb{S}=\{q\in \HH \ | \ q^{2}=-1 \}.$$
For every $I\in \mathbb{S}$ we will denote by $L_I$ the plane $\mathbb{R}+I\mathbb{R}$, isomorphic to $\mathbb{C}$, and, if $\Omega \subset \HH$, by $\Omega_I$ the intersection $\Omega \cap L_I$.

%Therefore, for any $q \in \HH \setminus \mathbb{R}$, there exist and are unique, $x, y \in \mathbb{R}$, with $y>0$ and $I_q \in \mathbb{S}$, such that $q=x+yI_q$. If $q$ is real, then $I_q$ can be any element of $\mathbb{S}$.  
%Let us denote with $\IIm_{I_q}\{q\}$ the component of the quaternion $q$ in the direction of the imaginary unit $I_q \in \mathbb{S}$, that is: if $q\in \HH \setminus \mathbb{R}$, then $q=x+yI_q$ and $\IIm_{I_q}\{q\}=y$, otherwise $\IIm_{I_q}\{q\}=0$.
%\vskip .2cm
Let us now recall the definition of slice regularity
\begin{defn}
Let $\Omega$ be a domain in $\HH$. A function $f: \Omega \rightarrow \HH$ is called {\em slice regular} if, for all $I\in \mathbb{S}$,  its restriction $f_I$ to $\Omega_I$ has continuous partial derivatives and satisfies
$$\overline{\partial_I} f(x+yI):=\frac{1}{2}\Big(\frac{\partial}{\partial x}  +I\frac{\partial}{\partial y}\Big)f_I(x+yI)=0$$
for every $x+yI \in \Omega_I$.
\end{defn}
\noindent In the sequel we may refer to the vanishing of $\overline{\partial_I}f$ saying that the restriction $f_I$ is holomorphic on $\Omega_I$.
In what follows, for the sake of simplicity, we will omit the prefix slice when referring to slice regular functions.
Even if the definition of regularity can be given for any domain in $\HH$, to avoid degenerate cases, like regular functions that are not even continuous, we need a special class of domains, introduced in \cite{ext, CaterinaCV}.
\begin{defn}
Let $\Omega$ be a domain in $\HH$. We say that $\Omega$
is a {\em slice domain} if \ $\Omega \cap \mathbb{R}\neq \varnothing$ and
if, for every $I \in \mathbb{S}$, $\Omega_I$ is a domain in $L_I$.
\end{defn}
\noindent Another condition that is natural to require for a domain of definition of a regular function, and that appears in \cite{kernel, ext, open}, is the following.
\begin{defn}
A subset $U$ of $\HH$ is said to be {\em axially symmetric} if for all $x+yI\in U$, with $x,y \in \rr, \ y \neq 0,$ and $I \in \s$, the whole $2$-sphere $x+y\s=\{x+yJ \ | \ J \in \s \}$ is contained in $U$.  
\end{defn}
\noindent We point out that axially symmetric sets were previously introduced in \cite{Cullen}, under the name of intrinsic domains. For the sake of shortness we will refer to axially symmetric sets, simply as symmetric sets.
The symmetric slice domains play the role that the domains of holomorphy play in classical complex analysis.
In \cite{ext} it is indeed proved that every regular function, defined on a slice domain $\Omega$, can be uniquely extended to the smallest symmetric domain containing $\Omega$.
 
\noindent A natural notion of derivative can be given for regular functions, as follows (see \cite{GSAdvances})
\begin{defn}
Let $\Omega$ be a slice domain in $\HH$, and let $f: \Omega \rightarrow \HH$ be a regular function. The {\em slice derivative} of $f$ at $q=x+yI$ is defined as
\begin{equation*}
\partial_S f(x+yI)= \frac{\partial}{\partial x} f(x+yI).
%\left\{ \begin{array}{ll}
% \partial_I f(x+yI)=\frac{1}{2}\Big(\frac{\partial}{\partial x}  -I\frac{\partial}{\partial y}\Big)f_I(x+yI) & \text{if} \quad q=x+yI \;,\; y\neq 0 \\
%\frac{\partial}{\partial x} f(x) & \text{if} \quad q=x \in \mathbb{R}
%\end{array}
%\right.
\end{equation*}
\end{defn}
\noindent Notice that this definition is well posed because it is applied only to regular functions.
%$$\frac{1}{2}\frac{\partial}{\partial x}f_I(x+yI) =-\frac{1}{2}I\frac{\partial}{\partial y}f_I(x+yI).$$
Notice also that the operators $\partial_S$ and $\overline{\partial_I}$ commute, 
%so
%$$0=\partial_S\overline{\partial_I} f=\overline{\partial_I} \partial_S f.$$
therefore the slice derivative of a regular function is still regular, and we can iterate the differentiation, \cite{GSAdvances},
$$\partial_S^n f= \frac{\partial^n}{\partial x^n}f \quad \text{for any} \quad n \in \mathbb{N}.$$

%The preliminary results that follow will be stated for regular functions defined on open balls centred at the origin, 
%$B=B(0,R)=\{q \in \HH \, |\, |q|<R \}$, even if  in most of the
%cases these results hold, with appropriate changes,   for regular functions defined on slice domains, \cite{ext}. 
As stated in \cite{GSAdvances}, a quaternionic power series $\sum_{n\geq 0}q^na_n$ with $\{a_n\}_{n\in \mathbb{N}}\subset \HH$ defines a regular function in its domain of convergence, which proves to be a ball $B(0,R)=\{q\in \HH \,|\, |q|<R\}$ with $R$ equal to the radius of convergence of the power series. Moreover, in \cite{GSAdvances}, it is also proved that
\begin{teo}
A function $f$ is regular on $B=B(0,R)$ if and only if $f$ has a power series expansion
$$f(q)=\sum_{n \geq 0}q^na_n \quad\text{with} \quad a_n=\frac{1}{n!}\frac{\partial^n f}{\partial x^n}(0).$$
\end{teo}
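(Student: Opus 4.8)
The plan is to prove the power series characterization in both directions, mirroring the classical complex-analytic argument while respecting the non-commutativity of $\HH$. The ``if'' direction is the easy one: as already recalled in the excerpt from \cite{GSAdvances}, a quaternionic power series $\sum_{n\geq 0}q^na_n$ converges on a ball $B(0,R)$ and defines a regular function there, so I need only verify that the coefficients are forced to equal $\frac{1}{n!}\partial_S^n f(0)$. To do this I would differentiate the series term by term. Since the slice derivative acts as $\partial_S^n = \frac{\partial^n}{\partial x^n}$ and commutes with $\overline{\partial_I}$ (so term-by-term differentiation preserves regularity and is legitimate inside the radius of convergence), applying $\partial_S^n$ to $\sum_{m\geq 0}q^m a_m$ and then evaluating at $q=0$ kills every term except $m=n$ and returns $n!\,a_n$, which gives the coefficient formula.

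The substantive direction is ``only if'': given $f$ regular on $B=B(0,R)$, I must produce the power series expansion. The natural strategy is to transfer the problem to a single slice. Fix an imaginary unit $I\in\s$ and consider the restriction $f_I$ on the disc $B_I = B\cap L_I$; by definition of slice regularity, $f_I$ is holomorphic on this disc in the plane $L_I\cong\mathbb{C}$. Writing $\HH = L_I \oplus L_I^{\perp}$, I would split $f_I$ into two $L_I$-valued holomorphic components (a $\mathbb{C}$-valued function has a Taylor expansion, and doing this componentwise handles the quaternionic values), so that classical one-variable complex analysis yields a convergent Taylor expansion $f_I(z) = \sum_{n\geq 0} z^n a_n$ on $B_I$ with $a_n\in\HH$ and $a_n = \frac{1}{n!}\frac{\partial^n f_I}{\partial x^n}(0) = \frac{1}{n!}\partial_S^n f(0)$, using that $z$ commutes with itself within the slice.

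The main obstacle is promoting this slice-wise expansion to a genuine expansion on the whole ball $B$, valid for every $q=x+yJ$ with arbitrary $J\in\s$, with \emph{one} set of coefficients $\{a_n\}$ independent of the slice. Here the key point is that the coefficients $a_n = \frac{1}{n!}\partial_S^n f(0)$ are computed at the real point $0$ and hence do not depend on the choice of $I$; the series $g(q):=\sum_{n\geq 0}q^n a_n$ is therefore a well-defined regular function on $B(0,R)$ (by the ``if'' direction and the fact that the radius of convergence is at least $R$, since on $L_I$ it already converges on $B_I$). Both $f$ and $g$ are regular on the symmetric slice domain $B$ and agree on $B_I = B\cap L_I$, which is a set with an accumulation point on the real axis; invoking the Identity Principle for slice regular functions (recalled among the basic results in the Preface) forces $f\equiv g$ on all of $B$. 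This is the step where non-commutativity and the sliced structure genuinely enter, and it is the reason the statement requires the theory-specific Identity Principle rather than a naive componentwise argument.
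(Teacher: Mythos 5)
Your proof is correct, but there is nothing in the paper to compare it against: the paper does not prove this theorem at all, it recalls it verbatim from \cite{GSAdvances} as a preliminary. Judged against the standard argument (the one in \cite{GSAdvances}), your reconstruction matches it in substance and differs only at the final step. The direct route uses exactly the observation you make --- that $a_n=\frac{1}{n!}\partial_S^nf(0)$ is slice-independent, because iterated $\partial/\partial x$-derivatives at real points are determined by $f|_{\mathbb{R}}$ alone --- and then simply notes that the Splitting Lemma \ref{split} plus classical Taylor expansion give $f_I(z)=\sum_{n\geq 0}z^na_n$ on \emph{every} slice $B_I$ with the \emph{same} coefficients; since each $q\in B$ lies on some slice, $f(q)=\sum_{n\geq 0}q^na_n$ follows with no further machinery. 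Your variant instead expands on a single slice, builds $g(q)=\sum_{n\geq 0}q^na_n$ (your convergence argument is right: convergence of such a series depends only on $|q|$, so convergence on the disc $B_I$ of radius $R$ yields convergence on the whole ball), and invokes the Identity Principle to force $f\equiv g$. This is legitimate, and it even makes the slice-independence of the $a_n$ a corollary rather than an ingredient; but note that it uses a result the paper states \emph{after} this theorem. There is no actual circularity, since the paper derives the Identity Principle from the Splitting Lemma alone and not from power series expansions, yet the direct argument is the natural one if the theory is being built from scratch. Two small imprecisions to fix: term-by-term differentiation in your ``if'' direction is justified by locally uniform convergence of the derived series on each slice, not by the commutation of $\partial_S$ with $\overline{\partial_I}$, which is beside the point; and your componentwise splitting of $f_I$ is precisely the Splitting Lemma \ref{split}, which you should cite rather than re-derive.
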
  
\noindent From a slicewise version of the Cauchy Integral Formula, see \cite{GSAdvances}, it follows that the $n$-th coefficient $a_n$ of the power series expansion of a regular function $f:B(0,R) \rightarrow \HH$ has an integral representation. More precisely, for all $n \geq 0$, if $I \in \s$, $r \in (0,R)$ and $\Delta_I(0,r)=\{ z \in L_I \ | \ |z|<r\} $, then
\begin{equation}\label{intrep} 
a_n= \frac{1}{2\pi I}\int_{\partial \Delta_I(0,r)} \frac{dz}{z^{n+1}}f(z).
\end{equation}
One of the key tools of the theory of regular functions, that connects slice regularity and classical holomorphy, is the following, \cite{GSAdvances}:
\begin{lem}[Splitting Lemma]\label{split}
Let $\Omega$ be a slice domain in $\HH$. If $f$ is a regular function on $\Omega$, then for every $I \in \mathbb{S}$ and for every $J \in \mathbb{S}$, $J$ orthogonal to $I$, there exist two holomorphic functions $F,G:\Omega_I \rightarrow L_I$, such that for every $z=x+yI \in \Omega_I$, the following equality holds
$$f_I(z)=F(z)+G(z)J.$$
\end{lem}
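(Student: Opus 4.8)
The plan is to fix the two orthogonal imaginary units $I,J\in\s$ and to exploit the elementary algebraic fact that the quadruple $\{1,I,J,IJ\}$ is a real basis of $\HH$; indeed $(IJ)^2=-1$ and $IJ$ is orthogonal to $1,I,J$, so these four elements multiply exactly like $1,i,j,k$. This yields a splitting of $\HH$, as a real vector space, into the direct sum $L_I\oplus L_IJ$: every quaternion $q$ can be written uniquely as $q=a+bJ$ with $a,b\in L_I$, since $\{1,I\}$ spans $L_I$ and $\{J,IJ\}$ spans $L_IJ$. The associated decomposition $q\mapsto(a,b)$ is a fixed $\rr$-linear projection.

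First I would define $F$ and $G$ by applying this pointwise splitting to the restriction $f_I$. That is, for each $z\in\Omega_I$ I set $F(z),G(z)\in L_I$ to be the unique pair with $f_I(z)=F(z)+G(z)J$. Because $f_I$ has continuous partial derivatives by the definition of regularity, and because $F$ and $G$ are obtained from $f_I$ through a constant $\rr$-linear projection, both $F$ and $G$ automatically have continuous partial derivatives on $\Omega_I$. It then remains only to check that $F$ and $G$ are holomorphic, i.e.\ that they satisfy the Cauchy--Riemann equations for functions of $z=x+yI\in\Omega_I$ valued in $L_I\cong\mathbb{C}$.

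For this I would substitute $f_I=F+GJ$ into the regularity condition $\overline{\partial_I}f=\tfrac12(\partial_x+I\partial_y)f_I=0$ and expand. The crucial bookkeeping is that $J$ is constant, so $\partial_y(GJ)=(\partial_yG)J$, and that the product of two elements of $L_I$ stays in $L_I$; in particular $I(\partial_yG)J=[I\,\partial_yG]J$ with $I\,\partial_yG\in L_I$. Collecting terms gives
$$\big(\partial_xF+I\,\partial_yF\big)+\big(\partial_xG+I\,\partial_yG\big)J=0,$$
where the first summand lies in $L_I$ and the second in $L_IJ$. By the uniqueness of the splitting both must vanish separately, yielding $\partial_xF+I\,\partial_yF=0$ and $\partial_xG+I\,\partial_yG=0$, which are exactly the holomorphy conditions for $F$ and $G$ on $\Omega_I$.

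The only real point of care -- and the step I would expect to be the main obstacle -- is the algebraic manipulation in the last paragraph, where the non-commutativity of $\HH$ must be handled so that the two pieces of $\overline{\partial_I}f$ genuinely separate along $L_I$ and $L_IJ$. Everything else (continuity of the partials and uniqueness of the decomposition) is immediate once the direct-sum structure $\HH=L_I\oplus L_IJ$ is in place.
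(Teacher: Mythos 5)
Your proposal is correct and follows what is essentially the canonical argument: the paper itself does not prove this lemma but recalls it from the cited reference \cite{GSAdvances}, and the proof there proceeds exactly as you do, decomposing $\HH = L_I \oplus L_I J$ via the real basis $\{1, I, J, IJ\}$ and observing that the operator $\overline{\partial_I}$ separates along this direct sum, so that the two $L_I$-valued components of $f_I$ each satisfy the Cauchy--Riemann equations (and inherit continuous partials from $f_I$ through the constant $\rr$-linear projections). Nothing is missing; in particular you correctly isolate the only delicate point, namely that left multiplication by $I$ preserves $L_I$ and that $I(\partial_y G)J = [I\,\partial_y G]J$ lands in $L_I J$, which is what makes the separation legitimate despite non-commutativity.
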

\noindent One of the first consequences of the previous result is the following version of the Identity Principle, \cite{GSAdvances}:
\begin{teo}[Identity Principle]
Let $f$ be a regular function on a slice domain $\Omega$. Denote by $Z_f$ the zero set of $f$, $Z_f=\{ q \in \Omega \  | \, f(q)=0 \}$. If there exists $I \in \mathbb{S}$ such that $\Omega_I \cap  Z_f$ has an accumulation point in $\Omega_I$, then
$f$ vanishes identically on $\Omega$.
\end{teo}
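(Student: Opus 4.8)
The plan is to reduce the quaternionic statement to the classical Identity Principle for holomorphic functions of one complex variable by means of the Splitting Lemma, and then to propagate the vanishing from a single slice to the whole domain using the real points guaranteed by the slice domain hypothesis.

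First I would work on the distinguished slice $\Omega_I$ where the accumulation point lives. Choosing some $J \in \mathbb{S}$ orthogonal to $I$, the Splitting Lemma provides holomorphic functions $F, G : \Omega_I \to L_I$ with $f_I(z) = F(z) + G(z)J$ for every $z \in \Omega_I$. The key observation is that the real subspaces $L_I = \mathrm{span}_{\mathbb{R}}(1, I)$ and $L_I J = \mathrm{span}_{\mathbb{R}}(J, IJ)$ are orthogonal in $\mathbb{H}$; hence $f_I(z) = 0$ forces $F(z) = 0$ and $G(z) = 0$ separately. Consequently both $F$ and $G$ vanish on $\Omega_I \cap Z_f$, which by hypothesis has an accumulation point in $\Omega_I$. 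Since $\Omega_I$ is a domain in $L_I \cong \mathbb{C}$, the classical Identity Principle yields $F \equiv 0$ and $G \equiv 0$, so that $f$ vanishes identically on the whole slice $\Omega_I$.

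The second step is to spread this information to every other slice. Because $\Omega$ is a slice domain, $\Omega \cap \mathbb{R} \neq \varnothing$; being open in $\mathbb{R}$ and nonempty, $\Omega \cap \mathbb{R}$ contains a nondegenerate interval, hence is infinite and possesses accumulation points. Since $\mathbb{R} \subset L_K$ for every $K \in \mathbb{S}$ and $\Omega \cap \mathbb{R} \subset \Omega_I$, the function $f$ already vanishes on $\Omega \cap \mathbb{R}$. Fixing now an arbitrary $K \in \mathbb{S}$, the set $\Omega_K \cap Z_f$ contains $\Omega \cap \mathbb{R}$ and therefore has an accumulation point in $\Omega_K$. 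Repeating the Splitting Lemma argument on $\Omega_K$ gives $f \equiv 0$ on $\Omega_K$. As $K$ ranges over $\mathbb{S}$ and $\Omega = \bigcup_{K \in \mathbb{S}} \Omega_K$, we conclude $f \equiv 0$ on $\Omega$.

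The main obstacle is the passage from one slice to all of $\Omega$: on an arbitrary slice $\Omega_K$ there is a priori no reason for the zeros of $f$ to accumulate, so the argument genuinely relies on the slice domain hypothesis to furnish a common set of real zeros lying in every $L_K$. A secondary point requiring care is the orthogonal decomposition $\mathbb{H} = L_I \oplus L_I J$, which is what allows one to conclude that $F$ and $G$ vanish individually rather than merely in combination.
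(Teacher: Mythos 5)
Your proof is correct and follows essentially the same route as the argument the paper points to: the Identity Principle is stated there as a direct consequence of the Splitting Lemma (citing Gentili--Struppa), namely reduction to the classical complex Identity Principle for the holomorphic components $F,G$ on the slice $\Omega_I$, followed by propagation of the vanishing through the real points $\Omega\cap\mathbb{R}$ to every other slice $\Omega_K$. Both of your key ingredients --- the decomposition $\mathbb{H}=L_I\oplus L_IJ$ forcing $F$ and $G$ to vanish separately, and the slice-domain hypothesis supplying accumulating real zeros common to all slices --- are exactly those of the standard proof.
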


\noindent The pointwise product of two regular functions is not, in general, regular. To maintain the regularity, a new multiplication operation, the $*$-product, was introduced. 
 On open balls centered at the origin, the $*$-product of two regular functions can be defined by means of their power series expansions, \cite{zeri}, extending the classical $*$-product of polynomials  with coefficients in a non commutative ring (see, e.g., \cite{Lam}). As we will see, the generalization to the symmetric slice domains is based on the following result, whose  proof is in \cite{kernel, ext}.
 \begin{lem}[Extension Lemma]\label{extensionlemma}
 Let $\Omega$ be a symmetric slice domain and choose $I \in \s$. If $f_I: \Omega_I \rightarrow \HH$ is holomorphic, then setting 
\begin{equation*}
f(x+yJ)=\frac{1}{2}[f_I(x+yI)+f_I(x-yI)] +J\frac{I}{2}[f_I(x-yI)-f_I(x+yI)] 
\end{equation*} 
extends $f_I$ to a regular function $f: \Omega \rightarrow \HH$. Moreover $f$ is the unique extension and it is denoted by $\ext(f_I)$.
\end{lem}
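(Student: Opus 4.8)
The plan is to verify, in turn, that the formula gives a well-defined function on $\Omega$, that it restricts to $f_I$ on $\Omega_I$, that it is regular, and finally that it is the only regular extension. First I would settle well-definedness. Since $\Omega$ is symmetric, whenever $x+yJ\in\Omega$ with $y\neq 0$ both $x+yI$ and $x-yI=x+y(-I)$ lie in $\Omega$, and being in $L_I$ they lie in $\Omega_I$; hence $f_I(x\pm yI)$ are defined and the formula makes sense. The only ambiguity in the coordinates of a point is the simultaneous replacement of $(y,J)$ by $(-y,-J)$, and a direct substitution shows the right-hand side is invariant under it, so $f$ is well defined; for real points ($y=0$) the formula collapses to $f(x)=f_I(x)$ independently of $J$. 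Choosing $J=I$ and using $I\cdot I=-1$ one checks immediately that $f(x+yI)=f_I(x+yI)$, so $f$ extends $f_I$.

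The core is regularity. The key structural observation is that, writing $\alpha(x,y)=\frac12[f_I(x+yI)+f_I(x-yI)]$ and $\beta(x,y)=\frac{I}{2}[f_I(x-yI)-f_I(x+yI)]$, we have $f(x+yJ)=\alpha(x,y)+J\beta(x,y)$ with $\alpha,\beta$ \emph{independent of $J$}. Since $f_I$ has continuous partial derivatives, so do $\alpha,\beta$, whence $f_J$ does. Then, using $J^2=-1$,
$$\overline{\partial_J}f=\frac12\Big(\frac{\partial}{\partial x}+J\frac{\partial}{\partial y}\Big)(\alpha+J\beta)=\frac12\Big[\Big(\frac{\partial\alpha}{\partial x}-\frac{\partial\beta}{\partial y}\Big)+J\Big(\frac{\partial\beta}{\partial x}+\frac{\partial\alpha}{\partial y}\Big)\Big].$$
I would then differentiate $\alpha$ and $\beta$ term by term --- the two evaluations $x\pm yI$ contribute opposite signs when differentiating in $y$ via the chain rule --- and substitute the holomorphy relation $\partial_u f_I=-I\,\partial_v f_I$ satisfied by $f_I$ at each point, where $u,v$ denote the coordinates on $L_I$ so that $f_I=f_I(u+vI)$. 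A short computation shows that both brackets vanish identically, and crucially they vanish as quaternions, independently of the particular $J$, so $\overline{\partial_J}f=0$ for every $J\in\mathbb{S}$. This proves $f$ is regular on $\Omega$.

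Finally, uniqueness follows from the Identity Principle: if $g$ is another regular extension, then $g-f$ is regular on the slice domain $\Omega$ and vanishes on all of $\Omega_I$, a set with accumulation points in $\Omega_I$; hence $g-f\equiv 0$ and $g=f$. The main obstacle is the regularity computation: the bookkeeping of the signs produced by differentiating the two reflected evaluations $f_I(x+yI)$ and $f_I(x-yI)$, together with the noncommutative factor $I$ in $\beta$, must be carried out carefully so that the two brackets cancel exactly. The payoff is that the cancellation is purely formal once the holomorphy of $f_I$ is inserted, which is precisely what makes a single slice $L_I$ determine the function on the whole symmetric domain.
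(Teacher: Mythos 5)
Your proof is correct. Note that the paper itself states the Extension Lemma without proof, citing \cite{kernel, ext}; your argument --- well-definedness from axial symmetry and invariance under $(y,J)\mapsto(-y,-J)$, the direct computation of $\overline{\partial_J}f$ for the $J$-independent components $\alpha,\beta$ using the Cauchy--Riemann relation $\partial_u f_I=-I\,\partial_v f_I$ on the slice $L_I$, and uniqueness via the Identity Principle --- is exactly the standard verification given in those references, so there is nothing to add.
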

\noindent In order to define the regular product of $f$ and $g$, regular functions on a symmetric slice domain $\Omega$, take $I,J \in \s$, with $I$ orthogonal to $J$, and choose holomorphic functions $F,G,H,K: \Omega_I \rightarrow L_I$ such that for all $z\in \Omega_I$
$$f_I(z)=F(z)+G(z)J \quad \text{and} \quad g_I(z)=H(z)+K(z)J.$$
Let $f_I * g_I : \Omega_I \rightarrow L_I$ be the holomorphic function defined as
$$f_I * g_I(z)=[F(z)H(z)-G(z)\overline{K(\overline{z})}]+[F(z)K(z)+G(z)\overline{H(\overline{z})}]J.$$
The following definition is given in \cite{ext}.
\begin{defn}
Let $\Omega$ be a symmetric slice domain in $\HH$, and let $f,g: \Omega \rightarrow \HH$ be regular functions.
The {\em regular product} (or {\em $*$-product}) of $f$ and $g$ is the function defined as
$$f*g(q)=\ext(f_I * g_I)(q),$$
regular on $\Omega$.
\end{defn}
\noindent Notice that the $*$-product is associative and is not, in general, commutative. Its relation with the pointwise product is clarified by the following result (see \cite{zeri, ext}).
\begin{pro}\label{trasf}
Let $f$ and $g$ be regular functions on a symmetric slice domain $\Omega$. Then, for all $q\in \Omega$,
\begin{equation}\label{prodstar}
f*g(q)= \left\{ \begin{array}{ll}
 f(q)g(f(q)^{-1}qf(q)) & \text{if} \quad f(q)\neq 0\\
0 & \text{if} \quad f(q)=0
\end{array}
\right.
\end{equation}
\end{pro}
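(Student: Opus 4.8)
The plan is to first extract the algebraic mechanism on a ball $B=B(0,R)$ centered at the origin, where both factors admit power series, and then to recover the general symmetric slice domain by a slicewise argument. The computation rests on two elementary facts about the inner twist $q\mapsto s^{-1}qs$, $s\in\HH\setminus\{0\}$: since a real scalar is central, if $q=x+yI$ then $s^{-1}qs=x+y\,s^{-1}Is$, and $I':=s^{-1}Is$ again squares to $-1$, so $I'\in\s$ and the twist sends $L_I$ to $L_{I'}$ while fixing $\RRe q$ and $|\IIm q|$; moreover $(s^{-1}qs)^m=s^{-1}q^m s$ for every $m$, because conjugation is a ring automorphism.

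On $B$ write $f(q)=\sum_{n\ge 0}q^na_n$ and $g(q)=\sum_{m\ge 0}q^mb_m$, so that by definition $f*g(q)=\sum_{k\ge 0}q^k\sum_{i+j=k}a_ib_j=\sum_{n,m}q^{n+m}a_nb_m$, the absolute convergence for $|q|<R$ legitimizing the rearrangement. Fix $q\in B$ and set $s=f(q)$. Using the two facts above,
\[
g\big(s^{-1}qs\big)=\sum_m (s^{-1}qs)^m b_m=s^{-1}\Big(\sum_m q^m s\, b_m\Big),
\]
whence $f(q)\,g(s^{-1}qs)=s\cdot s^{-1}\sum_m q^m s\,b_m=\sum_m q^m f(q)\,b_m$. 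Substituting $f(q)=\sum_n q^na_n$ and using that powers of $q$ commute, this equals $\sum_{n,m}q^{n+m}a_nb_m=f*g(q)$, proving the formula when $f(q)\neq 0$. Observe that the intermediate identity $f*g(q)=\sum_m q^m f(q)\,b_m$ holds for every $q\in B$ regardless of the value of $f(q)$; in particular it forces $f*g(q)=0$ as soon as $f(q)=0$, which is the second case.

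For a general symmetric slice domain $\Omega$ the value $f*g(q)$ is, by definition, $\ext(f_I*g_I)(q)$, so I would verify the identity one slice at a time. Choosing $I,J\in\s$ with $J$ orthogonal to $I$ and writing $f_I=F+GJ$, $g_I=H+KJ$ as in the Splitting Lemma, the real points are immediate: for $q=x\in\Omega\cap\rr$ the twist is trivial, and expanding $(F(x)+G(x)J)(H(x)+K(x)J)$ with the rule $Jw=\bar w J$ (valid for $w\in L_I$) reproduces exactly $f_I*g_I(x)=\ext(f_I*g_I)(x)$. For a non-real $q=x+yI$ with $s=f(q)\neq 0$, I would compute $g$ at the twisted argument $x+yI'$, $I'=s^{-1}Is$, by means of the Extension Lemma, which expresses $g(x+yI')$ affinely through the base-slice values $g_I(x\pm yI)$, and then multiply on the left by $s$; comparing the outcome with the explicit formula for $f_I*g_I(x+yI)$, which involves $\overline{H(\bar z)}$ and $\overline{K(\bar z)}$, yields the claimed equality.

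The main obstacle is precisely this non-real slicewise verification: one must reconcile the inner twist $q\mapsto s^{-1}qs$, a genuinely quaternionic and $q$-dependent operation, with the Extension-Lemma description of the $*$-product, carefully tracking the non-commutativity. The two structural identities $(s^{-1}qs)^m=s^{-1}q^m s$ and $Jw=\bar w J$ for $w\in L_I$ are what make the bookkeeping collapse to the stated formula; the power series computation above is the transparent model of their interaction, of which the general case is the slicewise counterpart.
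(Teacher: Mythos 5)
The first thing to note is that the paper contains no proof of this Proposition at all: it is quoted from \cite{zeri, ext}, so your proposal can only be measured against those sources. Your argument on a ball $B(0,R)$ is correct, complete, and is essentially the proof in \cite{zeri}: the rearrangement $f*g(q)=\sum_{m\ge 0}q^mf(q)b_m$, valid for every $q\in B$ and hence disposing of the case $f(q)=0$ at once, combined with the automorphism identity $(s^{-1}qs)^m=s^{-1}q^ms$, gives the formula whenever $f(q)\neq 0$. No objection there.

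The gap is the passage to an arbitrary symmetric slice domain $\Omega$, which is the actual hypothesis of the statement and where no power series expansion is available. For that case you only say what you ``would'' do and assert that the comparison ``yields the claimed equality,'' and you yourself flag this verification as the main obstacle; as written, the general case is announced, not proved. The plan does work, and what is missing can be stated precisely. First, it suffices to verify the identity at points $q=x+yI$ of the splitting slice itself, because the definition $f*g=\ext(f_I*g_I)$ does not depend on the choice of $I$ (this well-posedness is part of the background cited from \cite{ext}, and your sketch uses it tacitly). Now let $z=x+yI$, $s=f_I(z)=F(z)+G(z)J\neq 0$, and $I'=s^{-1}Is$, so that $f(q)^{-1}qf(q)=x+yI'$. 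The Extension Lemma gives
\begin{equation*}
g(x+yI')=\frac{1}{2}\left[g_I(z)+g_I(\bar z)\right]+I'\,\frac{I}{2}\left[g_I(\bar z)-g_I(z)\right],
\end{equation*}
and the two identities that make the computation collapse are $sI'=Is$, which eliminates the $q$-dependent unit $I'$ in favor of data on the slice, and $IsI=-F(z)+G(z)J$, which follows from $IJ=-JI$ together with the fact that $F(z),G(z)\in L_I$ commute with $I$. Multiplying the display on the left by $s$ and expanding with your rule $Jw=\bar wJ$ for $w\in L_I$, all the terms in $H(z),H(\bar z),K(z),K(\bar z)$ recombine into
\begin{equation*}
\left[F(z)H(z)-G(z)\overline{K(\bar z)}\right]+\left[F(z)K(z)+G(z)\overline{H(\bar z)}\right]J=f_I*g_I(z)=f*g(q),
\end{equation*}
which is the claim; and if $s=0$, then $F(z)=G(z)=0$, so $f_I*g_I(z)=0$ directly. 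These are the steps your write-up leaves out: the reduction to the base slice via well-posedness, and the identities $sI'=Is$ and $IsI=-F(z)+G(z)J$ that allow the Extension Lemma expression to be matched against $f_I*g_I$.
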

\noindent 
%Set $T_f(q)=f(q)^{-1}qf(q)$. 
Notice that if $q=x+yI$ (and if $f(q)\neq 0$), then $f(q)^{-1}qf(q)$ has the same modulus and same real part as $q$, hence $f(q)^{-1}qf(q)$ lies in the same $2$-sphere $x+y\mathbb{S}$ as $q$. We have that a zero $x_0+y_0I$ of the function $g$ is not necessarily a zero of $f*g$, but an element on the same sphere $x_0+y_0\mathbb{S}$ is.
To present a characterization of the structure of the zero set of a regular function $f$ we need to introduce the following functions.

\begin{defn}
Let $f$ be a regular function on a symmetric slice domain $\Omega$ and suppose that for $z\in \Omega_I$, the splitting of $f$ with respect to $J$ is $f_I(z)=F(z)+G(z)J$. Consider the holomorphic function
$$f_I^c(z)=\overline{F(\overline{z})}-G(z)J.$$
The {\em regular conjugate} of $f$ is the function defined  by  
$$f^c(q)=\ext(f_I^c)(q).$$
The {\em symmetrization} of $f$ is the function defined by
$$f^s(q)=f*f^c(q)=f^c*f(q).$$
\noindent Both $f^c$ and $f^s$ are regular functions on $\Omega$.
\end{defn}
\noindent In \cite{ext} it has been proved that the function $f^s$ is slice preserving, i.e. $f^s(L_I)\subset L_I$ for every $I \in \s$. 
Thanks to this property it is possible to prove (see for instance \cite{zeri}) the following 
\begin{teo}
Let $f$ be a regular function on a symmetric slice domain $\Omega$. If $f$ does not vanish identically, then its zero set consists of the union of isolated points and isolated $2$-spheres of the form $x +y \mathbb{S}$ with $x,y \in \mathbb{R}$, $y \neq 0$.
\end{teo}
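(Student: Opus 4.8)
The plan is to reduce the global statement about $Z_f$ to two separate facts: a \emph{local} description of $Z_f$ on each single $2$-sphere $x+y\mathbb{S}$, and a \emph{separation} statement guaranteeing that the spheres (and real points) that actually meet $Z_f$ do not accumulate. Throughout I fix $I\in\mathbb{S}$, pick $J\in\mathbb{S}$ orthogonal to $I$, and use the Splitting Lemma \ref{split} to write $f_I(z)=F(z)+G(z)J$ with $F,G:\Omega_I\to L_I$ holomorphic. The second protagonist is the symmetrization $f^s$: a direct computation from the definition of the $*$-product gives $f^s_I(z)=F(z)\overline{F(\bar z)}+G(z)\overline{G(\bar z)}$ on $\Omega_I$, and I recall from \cite{ext} that $f^s$ is slice preserving.

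First I would describe $Z_f$ on a fixed sphere $x_0+y_0\mathbb{S}$ with $y_0\neq 0$. Since $\Omega$ is a symmetric slice domain we have $f=\ext(f_I)$, so the Extension Lemma \ref{extensionlemma} applies to $f$ itself and yields, for every $J\in\mathbb{S}$,
$$f(x_0+y_0J)=\tfrac12\big[f_I(x_0+y_0I)+f_I(x_0-y_0I)\big]+J\,\tfrac{I}{2}\big[f_I(x_0-y_0I)-f_I(x_0+y_0I)\big]=a+Jb,$$
with $a,b\in\HH$ depending only on the sphere. Thus on the sphere $f$ is an affine function of $J$, and the equation $a+Jb=0$ has the following trichotomy: if $b=0$ it has no solution (when $a\neq0$) or every $J$ is a solution (when $a=0$); if $b\neq0$ its only candidate solution is $J=-ab^{-1}$, which gives an actual zero precisely when $-ab^{-1}\in\mathbb{S}$. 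Hence $Z_f\cap(x_0+y_0\mathbb{S})$ is empty, a single point, or the entire sphere. The real points are handled analogously and even more simply, since there $f(x)=F(x)+G(x)J$ vanishes exactly when $F(x)=G(x)=0$.

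Next I would prove that the spheres and real points meeting $Z_f$ form a discrete set, which is where $f^s$ enters. By Proposition \ref{trasf} applied to $f^s=f*f^c$, if $f(q)=0$ then $f^s(q)=0$; since $f^s$ is slice preserving, the vanishing $f^s(x_0+y_0I)=0$ forces $f^s$ to vanish on the whole sphere $x_0+y_0\mathbb{S}$. Therefore every zero of $f$ lies on a sphere (or real point) at which $f^s_I$ vanishes. Now $f^s_I$ is holomorphic and not identically zero: if $f^s_I\equiv0$, then evaluating $F(z)\overline{F(\bar z)}+G(z)\overline{G(\bar z)}$ on the real axis gives $|F(x)|^2+|G(x)|^2=0$, so $F$ and $G$ vanish on the nonempty open set $\Omega\cap\mathbb{R}$, whence $F\equiv G\equiv0$ and $f\equiv0$ by the Identity Principle, against the hypothesis. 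Being a nonzero holomorphic function, $f^s_I$ has isolated zeros in $\Omega_I$; these isolated zeros are exactly the traces on $L_I$ of the spheres and real points carrying $Z_f$, so the latter intersect $L_I$ in a discrete subset and hence cannot accumulate. Combining this with the trichotomy of the previous step gives precisely the claimed description of $Z_f$ as a union of isolated points and isolated $2$-spheres $x+y\mathbb{S}$.

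I expect the main obstacle to be the passage from the single-sphere picture to the global one, that is, ruling out accumulation of zeros, which is exactly what the reduction to $f^s$ resolves; the two delicate points inside it are verifying that $f^s\not\equiv0$ (handled by the real-axis argument above, which crucially uses $\Omega\cap\mathbb{R}\neq\varnothing$) and using the slice-preserving property to upgrade a single zero of $f^s$ on one slice to a full sphere of zeros of $f^s$. The noncommutativity requires only mild care, namely solving $a+Jb=0$ by right multiplication by $b^{-1}$ and checking membership of $-ab^{-1}$ in $\mathbb{S}$.
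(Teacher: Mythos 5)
Your proposal is correct and takes essentially the same approach the paper relies on: the paper quotes this theorem as a preliminary whose proof (deferred to \cite{zeri}) hinges precisely on the slice-preserving property of the symmetrization $f^s$, which is the pivot of your separation argument, combined with the standard $a+Jb$ trichotomy on each sphere obtained from the Extension Lemma. The one step you assert without justification --- that a slice-preserving function vanishing at a point of $x_0+y_0\mathbb{S}$ must vanish on the whole sphere --- follows in one line from your own representation $f^s(x_0+y_0J)=a'+Jb'$, since slice preservation (applied at $J$ and $-J$) forces $a',b'\in\mathbb{R}$.
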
 
%\noindent For our purposes it is important to know how to $*$-factorize a zero of a regular function:
%\begin{teo}
%Let $f$ be a regular function on $B=B(0,R)$ and let $p\in B$. Then $f(p)=0$
%if and only if there exists $g$, a regular function on $B$, such that
%$$f(q)=(q-p)*g(q).$$
%\end{teo}
\noindent We are now able to define the inverse element of a regular function $f$ with respect to the $*$-product.
Recall that  $Z_{f^s}$denotes  the zero set of the symmetrization $f^s$ of $f$.
\begin{defn}
Let $f$ be a regular function on a symmetric slice domain $\Omega$. If $f$ does not vanish identically, its {\em regular reciprocal} is defined as the function
$$f^{-*}(q):=f^s(q)^{-1}f^c(q)$$
regular on $\Omega \setminus Z_{f^s}$.
\end{defn}

It is then possible to consider regular quotients of the form $f^{-*}*g$, defined outside the zero set of $f$, that are related to pointwise quotients by the following result, \cite{CaterinaCV}.

\begin{pro}\label{Caterina} 
Let $\Omega$ be a symmetric slice domain in $\HH$ and let $f$ and $g$ be regular functions on $\Omega$. If \  $T_f : \Omega \setminus Z_{f^s} \rightarrow  \Omega \setminus Z_{f^s}$ is defined as
$$T_{f}(q)=f^c(q)^{-1}qf^c(q),$$ then 
$$f^{-*}*g(q)=f(T_{f}(q))^{-1}g(T_{f}(q)) \quad \text{for every} \quad q \in B \setminus Z_{f^s}.$$ 
Furthermore, $T_f$ and $T_{f^c}$ are mutual inverses so that $T_f$ is a diffeomorphism.
\end{pro}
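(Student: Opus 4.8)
The plan is to establish the identity by reducing the quaternionic computation to the known pointwise-product formula of Proposition~\ref{trasf}, and then to verify that $T_f$ and $T_{f^c}$ invert one another by a direct modulus/real-part argument.

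First I would unwind the definition $f^{-*}*g = f^s{}^{-1}*f^c*g$ using associativity of the $*$-product, and apply Proposition~\ref{trasf} to rewrite the $*$-product as a conjugated pointwise product. The subtle point is that $f^s$ is slice preserving, so $f^s(q)$ is real-quaternionic-valued on the sphere through $q$ in the sense that $f^s$ commutes with $q$; more precisely, since $f^s(L_I)\subset L_I$ and $f^s(q)$ lies in the same plane as $q$, the conjugation appearing in Proposition~\ref{trasf} when multiplying by $f^s{}^{-1}$ acts trivially. Thus I expect $f^{-*}*g(q)$ to collapse to $f^s(q)^{-1}\big(f^c*g\big)(T(q))$ for the appropriate transformation $T$, and a second application of Proposition~\ref{trasf} to $f^c*g$ together with the identity $f^s = f*f^c = f^c*f$ should produce the factor $f(T_f(q))^{-1}g(T_f(q))$ with $T_f(q)=f^c(q)^{-1}qf^c(q)$. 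The key algebraic fact to pin down is that the two nested conjugations by $f^c(q)$ and by $f^s(q)$ compose correctly, and here the slice-preserving property of $f^s$ is what makes the $f^s$-conjugation disappear.

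For the last assertion I would compute $T_{f^c}(T_f(q))$ directly. Set $p=T_f(q)=f^c(q)^{-1}qf^c(q)$. Since conjugation by a nonzero quaternion preserves modulus and real part, $p$ lies on the same sphere $x+y\s$ as $q$, so $f^c$ takes the \emph{same} value or at least a value producing the inverse conjugation when evaluated along this transformation. The natural claim is $(f^c)^c = f$ (the regular conjugate is an involution, which follows from the Extension Lemma and the definition of $f_I^c$), so that $T_{f^c}$ conjugates by $f(p)^{-1}$; I would then show that $f\big(T_f(q)\big)$ and $f^c(q)^{-1}$ conjugate $q$ by mutually inverse quaternions, yielding $T_{f^c}(T_f(q))=q$. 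Being a smooth bijection with smooth inverse, $T_f$ is a diffeomorphism.

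The main obstacle will be bookkeeping the nested conjugations and confirming that the slice-preserving property of $f^s$ genuinely kills the outer conjugation, rather than merely the real part surviving; I expect to need the precise relation between $f^s(q)$ and the plane $L_I$ containing $q$, and the involutivity $(f^c)^c=f$, to close the argument cleanly.
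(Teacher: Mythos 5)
The paper itself contains no proof of Proposition \ref{Caterina}: it is stated in the Preliminaries as a known result quoted from \cite{CaterinaCV}, so the only possible comparison is with the argument of that reference, which is essentially what you reconstruct: write $f^{-*}=(f^s)^{-1}f^c$, apply Proposition \ref{trasf}, and exploit that $f^s$ is slice preserving, hence $f^s(q)$ commutes with $q$. Your outline of the first part is sound, up to one slip: the intermediate claim that $f^{-*}*g(q)$ collapses to $f^s(q)^{-1}(f^c*g)(T(q))$ ``for the appropriate $T$'' should read $f^{-*}*g(q)=f^s(q)^{-1}\,(f^c*g)(q)$, with evaluation at $q$ itself; left multiplication by the slice-preserving factor $(f^s)^{-1}$ passes through the $*$-product pointwise and introduces no transformation. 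The transformation appears only when Proposition \ref{trasf} is applied to $f^c*g$, giving $f^s(q)^{-1}f^c(q)\,g(T_f(q))$; then the identity $f^s(q)=(f^c*f)(q)=f^c(q)\,f(T_f(q))$ (Proposition \ref{trasf} again, legitimate since $f^c(q)\neq 0$ off $Z_{f^s}$) turns the prefactor into $f(T_f(q))^{-1}$, as required. Equivalently, one can apply Proposition \ref{trasf} directly to the pair $(f^{-*},g)$ and use commutation of $f^s(q)$ with $q$ to get $f^{-*}(q)^{-1}qf^{-*}(q)=f^c(q)^{-1}qf^c(q)=T_f(q)$.

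The genuine error is in the second part: $f^c(q)$ and $f(T_f(q))$ are \emph{not} mutually inverse quaternions, and the argument cannot run on that claim. Their product is $f^s(q)$, which in general is neither $1$ nor even real. What actually closes the argument is this: using $(f^c)^c=f$ (which you correctly justify), $T_{f^c}(p)=f(p)^{-1}pf(p)$, so the composition $T_{f^c}\circ T_f$ conjugates $q$ by the product $f^c(q)\,f(T_f(q))=f^s(q)$; since $f^s$ is slice preserving, $f^s(q)$ lies in the same plane $L_I$ as $q$ and therefore commutes with $q$, so this conjugation fixes $q$. In other words, the mechanism is exactly the identity $f^s(q)=f^c(q)f(T_f(q))$ that you already needed in part one, not mutual inversion of the two conjugators. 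With that replacement, $T_{f^c}\circ T_f=\mathrm{id}$, and $T_f\circ T_{f^c}=\mathrm{id}$ follows symmetrically by exchanging the roles of $f$ and $f^c$ (note $(f^c)^s=f^s$, so both maps live on the same domain $\Omega\setminus Z_{f^s}$).
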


A very important result in analogy with the complex case is the following (see \cite{open2}).
\begin{teo}[Maximum Modulus Principle]\label{PMM}
Let $f: \Omega \rightarrow \HH$ be a regular function on a slice domain $\Omega$. If $|f|$ has a relative maximum in $\Omega$, then $f$ is constant in $\Omega$. 
\end{teo}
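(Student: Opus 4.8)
The plan is to reduce the statement to the classical maximum principle for subharmonic functions on a single slice, and then to propagate the resulting constancy from that slice to all of $\Omega$ by means of the Identity Principle. Suppose $|f|$ attains a relative maximum at a point $q_0=x_0+y_0I_0$ with $I_0\in\s$ (if $q_0$ is real we may pick $I_0$ arbitrarily). Since $\Omega$ is a slice domain, $\Omega_{I_0}$ is a domain in $L_{I_0}\cong\mathbb{C}$ containing $q_0$ as an interior point, so $q_0$ is in particular a relative maximum of the modulus of the restriction $f_{I_0}$.

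First I would invoke the Splitting Lemma: fixing $J\in\s$ orthogonal to $I_0$, there are holomorphic functions $F,G:\Omega_{I_0}\to L_{I_0}$ with $f_{I_0}(z)=F(z)+G(z)J$. Because the plane $L_{I_0}=\rr+I_0\rr$ and the plane spanned by $\{J,I_0J\}$ are orthogonal in $\HH\cong\rr^4$, the Pythagorean identity yields $|f_{I_0}(z)|^2=|F(z)|^2+|G(z)|^2$ for every $z\in\Omega_{I_0}$. Each summand is a smooth subharmonic function (one has $\Delta|F|^2=4|F'|^2\ge 0$, and likewise for $G$), hence $u:=|F|^2+|G|^2$ is subharmonic on the connected planar domain $\Omega_{I_0}$. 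Since $u$ has an interior relative maximum at $q_0$, the strong maximum principle for subharmonic functions forces $u$ to be constant on all of $\Omega_{I_0}$. Then $\Delta u\equiv 0$ gives $|F'|^2+|G'|^2\equiv 0$, so $F'\equiv G'\equiv 0$ and both $F$ and $G$ are constant; therefore $f_{I_0}\equiv c$ for some $c\in\HH$.

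It remains to pass from one slice to the whole domain, which is the crux of the argument. As $\Omega$ is a slice domain we have $\varnothing\ne\Omega\cap\rr\subset\Omega_{I_0}$, so $f\equiv c$ on the real interval $\Omega\cap\rr$. Consider the regular function $g:=f-c$ (the difference of regular functions is regular). Its zero set $Z_g$ contains $\Omega\cap\rr$, a nonempty open subset of the real axis, whose points are accumulation points of $Z_g$ lying in $\Omega_{I_0}$ itself. By the Identity Principle $g$ vanishes identically, that is $f\equiv c$ on $\Omega$, as desired.

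The main obstacle I anticipate is making the reduction clean: one must check that the slicewise decomposition produces \emph{exactly} a sum of squared moduli, so that genuine subharmonicity (and not merely a weaker estimate) is available, and one must use the \emph{strong} maximum principle in order to upgrade a purely local maximum on the slice into global constancy on that slice. Once constancy on a single slice meeting the real axis is secured, the Identity Principle completes the proof with no further effort.
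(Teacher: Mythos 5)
The paper itself gives no proof of Theorem \ref{PMM}: it is recalled from the literature (see \cite{open2}), so your argument can only be judged on its own merits and against the standard published proof. Your overall strategy is sound: the Splitting Lemma, the Pythagorean identity $|f_{I_0}|^2=|F|^2+|G|^2$ (correct, since $1,I_0,J,I_0J$ form an orthonormal basis of $\HH$), and the final passage from constancy on one slice to constancy on $\Omega$ via the Identity Principle are all fine. The genuine flaw is the middle step, and you flag it yourself as the crux: a \emph{relative} maximum of a subharmonic function does \emph{not} force constancy on the whole domain. The strong maximum principle requires the supremum over the domain to be attained at an interior point; from a local maximum you only get constancy of $u=|F|^2+|G|^2$ on a small disc $B(q_0,r)$ (apply the principle on that disc, where the supremum is attained at the centre). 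Subharmonic functions satisfy no identity principle --- $\max(0,\log|z|)$ is subharmonic on $\mathbb{C}$, constant on the unit disc, yet not constant --- so ``constant on all of $\Omega_{I_0}$'' does not follow as you claim. The repair is immediate and uses ingredients already in your write-up: on $B(q_0,r)$ one has $0=\Delta u=4\left(|F'|^2+|G'|^2\right)$, so $F'\equiv G'\equiv 0$ there; since $F'$ and $G'$ are holomorphic on the connected domain $\Omega_{I_0}$ and vanish on an open set, the identity principle for \emph{holomorphic} functions gives $F'\equiv G'\equiv 0$ on all of $\Omega_{I_0}$, hence $F$, $G$, and $f_{I_0}$ are constant on the slice. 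With that one-line insertion your proof is complete.

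For comparison, the proof in the literature avoids subharmonicity altogether: after right-multiplying $f$ by a unit constant one may assume $f(q_0)\in\rr$, so that in the splitting $f_{I_0}=F+GJ$ one has $G(q_0)=0$ and $|F(q_0)|=|f(q_0)|$; then $|F|$ itself has a relative maximum at $q_0$, the classical maximum modulus principle for holomorphic functions makes $F$ constant, and $|G|^2=|f_{I_0}|^2-|F|^2\leq 0$ near $q_0$ forces $G\equiv 0$ near $q_0$, hence everywhere on the slice by the identity principle; the regular Identity Principle then finishes exactly as in your argument. Your route buys independence from that normalization trick, at the price of invoking --- and, as above, correctly localizing --- the maximum principle for subharmonic functions.
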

Among all the consequences that the previous result yields, there is also the analog of the Open Mapping Theorem for regular functions, whose statement needs a preliminary definition.
\begin{defn}
 Let $\Omega$ be a symmetric slice domain and let $f : \Omega \rightarrow \HH$ be a regular function. We define the {\em degenerate set} of $f$ as the union \ $D_f$ of the $2$-spheres $x+y\s$
(with $y \neq 0$) such that $f|_{x+y\s}$ is constant.
\end{defn}
\noindent We are now able to recall the following result (see \cite{open, open2}) and one of its consequences.
\begin{teo}[Open Mapping Theorem]
Let $\Omega$ be a symmetric slice domain and let $f: \Omega \rightarrow \HH$ be a regular function. 
If $D_f$ is the degenerate set of $f$, then $f: \Omega \setminus D_f \rightarrow \HH$ is open.
\end{teo}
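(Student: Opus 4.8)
The plan is to prove openness locally: it suffices to show that for every $q_0\in\Omega\setminus D_f$ the point $f(q_0)$ is interior to the image under $f$ of every sufficiently small ball $B(q_0,r)$. We may assume $f$ is non-constant, and after replacing $f$ by $g=f-f(q_0)$ (still regular) we reduce to the case $g(q_0)=0$ and must show that $g(B(q_0,r))$ contains a neighbourhood of $0$. The first observation is that $q_0$ is necessarily an \emph{isolated} zero of $g$: if $q_0=x_0+y_0I_0$ with $y_0\neq0$ and the whole sphere $x_0+y_0\s$ were a zero of $g$, then $f$ would be constant on $x_0+y_0\s$, i.e.\ $q_0\in D_f$, contrary to hypothesis; by the structure theorem for zero sets (isolated points or isolated spheres) $q_0$ is then an isolated point of $Z_g$. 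If $q_0$ is real the Identity Principle gives the same conclusion. Assuming $\Omega\setminus D_f$ is open, we may choose $r$ so small that $\overline{B(q_0,r)}\subset\Omega\setminus D_f$ and $g$ has no zero on $\overline{B(q_0,r)}\setminus\{q_0\}$; set $m=\min_{|q-q_0|=r}|g(q)|>0$.

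The crucial ingredient is a Minimum Modulus Principle, which I would derive from Theorem \ref{PMM}: if $h$ is regular and non-constant on the symmetric slice domain $\Omega$ and $|h|$ has a local minimum at a point $q_1$, then $h(q_1)=0$. To see this, suppose $h(q_1)\neq0$. The spherical geometry forces $q_1\notin Z_{h^s}$: on $x_1+y_1\s$ the Extension Lemma gives $h(x_1+y_1J)=a+Jb$ with $a,b\in\HH$ fixed, so that $|h(x_1+y_1J)|^2=|a|^2+|b|^2+2\langle\IIm(a\bar b),J\rangle$ is an affine (height) function of $J\in\s$; such a function has a single local minimum on the sphere, which must be the point where $h$ vanishes if the sphere carries a zero, so no nonzero point of that sphere can be a local minimum of $|h|$. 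Hence $h^{-*}$ is regular near $q_1$, and by Proposition \ref{Caterina}, applied with the constant function $1$, one has $|h^{-*}(q)|=|h(T_h(q))|^{-1}$, where $T_h$ is a sphere-preserving diffeomorphism. A local minimum of $|h|$ at $q_1$ therefore produces a local \emph{maximum} of $|h^{-*}|$ at $T_h^{-1}(q_1)$, and Theorem \ref{PMM} forces $h^{-*}$, hence $h$, to be constant, a contradiction.

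With this tool the boundary argument concludes the proof. Fix $p$ with $|p|<m/2$ and consider $h=g-p$, regular on $\Omega$ and non-constant. At the centre $|h(q_0)|=|p|<m/2$, while on $\partial B(q_0,r)$ we have $|h(q)|\ge|g(q)|-|p|\ge m-m/2=m/2$. Thus the minimum of $|h|$ over $\overline{B(q_0,r)}$ is attained at some interior point $q_1\in B(q_0,r)$, and this is a local minimum of $|h|$ in $\Omega$. By the Minimum Modulus Principle $h(q_1)=0$, that is $g(q_1)=p$ with $q_1\in B(q_0,r)\subset\Omega\setminus D_f$. Letting $p$ vary we obtain $B(0,m/2)\subseteq g(B(q_0,r))$, which is precisely the asserted openness at $q_0$.

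The main obstacle is the Minimum Modulus Principle, and specifically the passage from $|h|$ to $|h^{-*}|$: in the quaternionic setting the pointwise reciprocal is not regular, so one must pass through the $*$-reciprocal and the diffeomorphism $T_h$ of Proposition \ref{Caterina}, and check carefully that $T_h$ carries the local minimum of $|h|$ to a genuine local maximum of $|h^{-*}|$. The accompanying delicate point is the spherical structure of the zero set: one must rule out nonzero local minima of $|h|$ sitting on a sphere that already contains a zero of $h$ (handled above by the affine behaviour of $|h|$ on each sphere), and must guarantee that the zero produced by the argument lies in the prescribed ball $B(q_0,r)$ rather than elsewhere on its sphere. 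This is exactly where the hypothesis $q_0\notin D_f$ is used, together with the (separately verified) fact that $\Omega\setminus D_f$ is open.
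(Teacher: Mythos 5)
First, a remark on the comparison itself: this paper does not prove the Open Mapping Theorem at all; it is recalled as a preliminary with the proof deferred to \cite{open, open2}. Your proposal essentially reconstructs the argument of those references: a Minimum Modulus Principle obtained from Theorem \ref{PMM} by passing to the regular reciprocal $h^{-*}$ and the sphere-preserving diffeomorphism $T_h$ of Proposition \ref{Caterina}, the affine behaviour of $|h|^2$ on each sphere $x+y\s$ (your formula $|a+Jb|^2=|a|^2+|b|^2+2\langle\IIm(a\bar b),J\rangle$ is correct) to exclude nonzero local minima on spheres carrying a zero, and then the classical boundary-minimum argument. Two points you leave implicit are true but should be recorded: that a sphere containing no zero of $h$ is disjoint from $Z_{h^s}$ (the zero set of $h^s$ is exactly the symmetrized zero set of $h$), and that $\Omega\setminus Z_{h^s}$ is still a symmetric slice domain (one removes only isolated real points and isolated spheres, so slices stay connected and real points survive), so that Theorem \ref{PMM} may legitimately be applied to $h^{-*}$ there.

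The genuine gap is the claim, which you flag as a ``separately verified fact'', that $\Omega\setminus D_f$ is open: it is false. Take $f(q)=q^2$ on $\Omega=\HH$. Then $f(x+yJ)=x^2-y^2+2xyJ$, so the degenerate spheres are exactly $y\s$ with $y\neq 0$, and $D_f=\{q\in\HH \mid \RRe(q)=0,\ q\neq 0\}$. The point $0$ lies in $\Omega\setminus D_f$, yet every ball around $0$ meets $D_f$: the degenerate set is closed only in $\Omega\setminus\rr$, and degenerate spheres may shrink onto a real point. At such a point your proof cannot even start (there is no $r$ with $\overline{B(q_0,r)}\subset\Omega\setminus D_f$), and no proof could rescue the subspace-topology reading of the statement, since for this $f$
$$f\bigl(B(0,\epsilon)\setminus D_f\bigr)=B(0,\epsilon^2)\setminus(-\epsilon^2,0),$$
which contains $0$ but is not open, the negative reals being attained only as squares of nonzero purely imaginary quaternions. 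The theorem must therefore be read as it is proved in \cite{open, open2}: for every set $U$ that is open in $\HH$ and contained in $\Omega\setminus D_f$, the image $f(U)$ is open. Under this reading your argument is complete and the false auxiliary claim is simply not needed: given $q_0\in U$, the openness of $U$ itself provides the closed ball $\overline{B(q_0,r)}\subset U\subseteq\Omega\setminus D_f$, and the rest of your proof goes through verbatim.
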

\begin{coro} \label{openmapping}
Let $f$ be a regular function on $B=B(0,R)$. Then $f(B)$ is an open set.
\end{coro}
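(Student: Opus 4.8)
The plan is to deduce openness of $f(B)$ from the Open Mapping Theorem by isolating the only obstruction, namely the points coming from the degenerate set $D_f$. We may assume $f$ is non-constant, as is implicit in the statement (a constant function has a one-point image, which is not open), in analogy with the hypothesis of the classical result. Write $B=(B\setminus D_f)\cup D_f$, so that $f(B)=f(B\setminus D_f)\cup f(D_f)$. Since $D_f$ is a union of \emph{isolated} $2$-spheres (by the structure of the degenerate set established in \cite{open, open2}), the set $B\setminus D_f$ is open, and the Open Mapping Theorem guarantees that $f(B\setminus D_f)$ is open. It therefore remains to show that every point $w_0\in f(D_f)$ is interior to $f(B)$. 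This is not automatic: the dense open set $B\setminus D_f$ has image dense in $f(B)$, yet a point of $f(D_f)$ need only lie in the \emph{closure} of $f(B\setminus D_f)$, and one sees on examples such as $f(q)=(q-x_0)^2+y_0^2$ that the value on a degenerate sphere may be attained nowhere else. Genuine local work at each degenerate sphere is thus required.

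Fix a degenerate sphere $S=x_0+y_0\s\subset D_f$ on which $f\equiv w_0$, and set $g:=f-w_0$, regular on $B$ and identically zero on $S$. The key reduction is a factorization. The slice-preserving real polynomial $\Delta(q)=(q-x_0)^2+y_0^2=q^2-2x_0q+(x_0^2+y_0^2)$ vanishes exactly on $S$, and splitting $g$ on a slice $L_I$ as $g_I=P+QJ$ shows that $P$ and $Q$ both vanish at the two points $x_0\pm y_0I$, hence $\Delta_I(z)=(z-(x_0+y_0I))(z-(x_0-y_0I))$ divides each of them. By the Extension Lemma this yields $g=\Delta * h$ for a regular $h$, and since $\Delta$ is slice preserving, Proposition \ref{trasf} gives the \emph{pointwise} identity $g(q)=\Delta(q)\,h(q)$. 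Passing to the minimal such factorization, I may assume $h$ does not vanish identically on $S$, so that after shrinking we have $h$ continuous and bounded away from $0$ on a tubular neighborhood $V$ of $S$. In this way the behaviour of $f$ near $S$ is modelled on the explicit quadratic $\Delta$ twisted by a non-vanishing quaternionic factor.

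I would then verify directly that $\Delta$ maps every neighborhood of $S$ onto a neighborhood of $0$. Writing $q=x+yI$ with $I$ ranging over the \emph{whole} sphere $\s$, one computes $\Delta(q)=\big((x-x_0)^2-y^2+y_0^2\big)+2(x-x_0)y\,I$, so that the real part covers a neighborhood of $0$ while the imaginary part $2(x-x_0)y\,I$ sweeps out a full neighborhood of $0$ in $\IIm\HH$ precisely because $I$ is free to range over all of $\s$; solving these two equations for $(x,y)$ and $I$ against a prescribed target shows $0\in\operatorname{int}\Delta(V)$. This is the heart of the matter and pinpoints the main obstacle: on $S$ the map $f$ collapses an entire $2$-sphere to the single point $w_0$, so the Inverse Function Theorem is unavailable there and no argument by local invertibility is possible. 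It is exactly the two extra real parameters carried by the imaginary unit $I\in\s$ that restore surjectivity onto a neighborhood, compensating for the two dimensions a single slice lacks.

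Finally I would promote this covering from the model $\Delta$ to $g=\Delta\cdot h$ by a topological degree argument. On $\partial V$ the map $g$ does not vanish, since near $S$ the zeros of $\Delta$ form exactly $S\subset V^\circ$ and $h$ is non-vanishing; thus $\deg(g,V,0)$ is well defined. A computation of the differential of $q\mapsto(q-x_0)^2$ (whose real Jacobian determinant is $16(x-x_0)^2\big((x-x_0)^2+y^2\big)\ge 0$) shows that the two preimages of a generic target under $\Delta$ count with the \emph{same} sign, whence $\deg(\Delta,V,0)=2$; since $h$, as a non-vanishing map on a neighborhood of the $2$-sphere $S$, is homotopic through non-vanishing maps to a nonzero constant (and right multiplication by a nonzero quaternion is orientation preserving), one gets $\deg(g,V,0)=2\neq 0$. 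Non-vanishing of the degree forces $0\in\operatorname{int} g(V)$, that is $w_0\in\operatorname{int} f(B)$. When $h$ vanishes on $S$ one first factors out the full power $\Delta^{k}$ and uses that $w\mapsto w^{k}$ is onto a neighborhood of $0$ in $\HH$; the same conclusion follows with degree $2k$. Combining the cases, every point of $f(B)$ is interior, so $f(B)$ is open.
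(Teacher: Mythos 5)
Your strategy is genuinely different from the paper's: the paper states this corollary as a consequence of the Open Mapping Theorem quoted from \cite{open, open2}, but in those references it is actually proved directly from the Minimum Modulus Principle \ref{PMM}, not by splitting $B$ along the degenerate set. Unfortunately your route has a genuine gap at its very first step. The degenerate set is \emph{not} a union of isolated $2$-spheres: the ``isolated points and isolated spheres'' theorem describes the \emph{zero set} of a regular function, and nothing of the sort holds for $D_f$. For $f(q)=q^2$ every sphere $y\s$ with $0<y<R$ is degenerate, since $(yI)^2=-y^2$ for all $I\in\s$, so $D_f=\{q\in B : \RRe q =0,\ q\neq 0\}$ is a continuum of spheres accumulating at the origin. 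Consequently $B\setminus D_f$ is not open, and $f(B\setminus D_f)=B(0,R^2)\setminus(-R^2,0)$ is not open either: the point $0=f(0)$ belongs to it but is not interior to it. Since $0\notin D_f$, this point is covered by neither of your two cases --- it is untouched by your degree argument at degenerate spheres, and the claimed openness of $f(B\setminus D_f)$ fails precisely there. (This example also shows that the Open Mapping Theorem as stated must be read with care, since $D_f$ need not be closed in $B$; this is exactly why the references establish the present corollary via the Minimum Modulus Principle rather than by excising $D_f$.) Any proof that splits $B$ along $D_f$ needs a separate argument at real points where degenerate spheres accumulate.

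There is a second gap in the local step at a degenerate sphere. From ``$h$ does not vanish identically on $S$'' you conclude that $h$ is bounded away from $0$ on a tubular neighborhood of $S$; this is a non sequitur. The restriction of a regular function to $S=x_0+y_0\s$ has the form $I\mapsto b+Ic$, so after extracting the maximal power $\Delta^k$ the cofactor can still vanish at exactly one point $p\in S$, and no further factoring removes it: for $g=\Delta * (q-p)=\Delta(q)(q-p)$ with $p\in S$ the maximal power is $k=1$ and $h(q)=q-p$ vanishes at $p$. In that situation $h$ is \emph{not} homotopic to a nonzero constant through maps non-vanishing on $\overline{V}$ (it has a zero inside $V$), so the computation $\deg(g,V,0)=2k$ is unjustified, and your closing remark about $w\mapsto w^k$ addresses only repeated spherical factors, not this configuration. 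Repairing it would require controlling the local topological degree of a regular function at an isolated zero (e.g.\ showing it is positive, so it cannot cancel the contribution $2k$), a fact comparable in depth to the open mapping theorem itself. By contrast, the argument in \cite{open, open2} avoids both problems at once: for any $p\in B$, the connected component through $p$ of the zero set of $f-f(p)$ is a single point or a single sphere (here the isolated-zeros theorem of \cite{zeri} does apply); enclosing it in a neighborhood $U$ with $\overline{U}\subset B$ and no zeros on $\partial U$, and setting $\varepsilon=\min_{\partial U}|f-f(p)|>0$, one sees that for $|w-f(p)|<\varepsilon/2$ the minimum of $|f-w|$ on $\overline{U}$ is attained at an interior point, where the Minimum Modulus Principle \ref{PMM} forces $f=w$; hence $B(f(p),\varepsilon/2)\subset f(B)$.
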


We conclude this section, devoted to the preliminary results, with a mention to the fractional linear transformations of the space of the quaternions, which will turn out to be remarkably useful in our proofs. 
Let $\hat{\HH}$ be the Alexandroff compactification of $\HH$.
\begin{defn}
A function $L:\hat{\HH}\rightarrow \hat{\HH}$ is a {\em fractional linear transformation} if it is of the form 
$$L(q)=(qa+b)^{-1}(qc+d)$$ 
where $a,b,c,d \in \HH$ are such that $A= \left[ \begin{array}{ll}
             a & b \\
             c & d \\
             \end{array} \right]$ is invertible, i.e. such that the Dieudonn\'e determinant of $A$, $$det_{\mathbb{H}}(A)= \sqrt{|a|^2|d|^2 + |c|^2|b|^2
-2\RRe(c\overline{a}b\overline{d})}$$ 
does not vanish. 
\end{defn}
\noindent Even if they are not regular, the fractional linear transformations are homeomorphism of $\hat{\HH}$ onto itself and have some nice properties. 
For instance, let $\mathcal{F}_3$ be the family of $3$-dimensional spheres and $3$-dimensional affine subspaces of $\HH$. 
\begin{pro}
Every fractional linear transformation $L$, sends the family $\mathcal{F}_3$ into itself.
\end{pro}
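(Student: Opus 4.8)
The plan is to factor $L$ into a few elementary pieces and to check that each of them preserves $\mathcal{F}_3$. If $a=0$ then, since invertibility of $A$ forces $b,c\neq 0$, the map $L(q)=b^{-1}(qc+d)=b^{-1}qc+b^{-1}d$ is affine and no inversion occurs. The genuine case is $a\neq 0$, where the substitution $w=qa+b$, $q=(w-b)a^{-1}$, together with $w^{-1}w=1$, gives the decomposition
\[
L(q)=a^{-1}c+(qa+b)^{-1}\mu,\qquad \mu:=d-ba^{-1}c.
\]
This realizes $L$ as the composition $T_1\circ\iota\circ T_2$, where $T_2(q)=qa+b$ and $T_1(v)=a^{-1}c+v\mu$ are affine maps and $\iota(w)=w^{-1}$ is the inversion. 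Because $L$ is a homeomorphism of $\hat{\HH}$, the factor $T_1$ cannot be constant, so $\mu\neq 0$ and $T_1,T_2$ are affine bijections.

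First I would dispose of the affine factors. The maps $T_1,T_2$ are built from translations and from left or right multiplication by a nonzero quaternion; since $|aq|=|a|\,|q|=|qa|$, such a multiplication is $|a|$ times an orthogonal transformation of $\HH\cong\rr^4$, hence a Euclidean similarity. Similarities carry round hyperspheres to hyperspheres and hyperplanes to hyperplanes, and in $\rr^4$ these are precisely the $3$-dimensional spheres and $3$-dimensional affine subspaces, i.e.\ the elements of $\mathcal{F}_3$. Thus $T_1$ and $T_2$ map $\mathcal{F}_3$ into itself.

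The heart of the matter is the inversion $\iota$. I would describe a member of $\mathcal{F}_3$ as the zero locus in $\hat{\HH}$ of
\[
\alpha|q|^2+\RRe(\overline{\beta}q)+\gamma=0,\qquad \alpha,\gamma\in\rr,\ \beta\in\HH,\ (\alpha,\beta,\gamma)\neq 0,
\]
where $\RRe(\overline{\beta}q)=\langle\beta,q\rangle$ is the Euclidean scalar product of $\HH\cong\rr^4$, so that $\alpha\neq 0$ gives a $3$-sphere and $\alpha=0$ a hyperplane completed by $\infty$. Setting $p=q^{-1}$, hence $q=\overline{p}/|p|^2$ and $|q|^2=1/|p|^2$, substituting and clearing $|p|^2$ turns this equation into
\[
\gamma|p|^2+\RRe(\overline{\beta}\,\overline{p})+\alpha=0.
\]
Since $\RRe(\overline{\beta}\,\overline{p})=\RRe(\overline{\overline{\beta}}\,p)$, the resulting locus is again of the admissible form, with $\alpha$ and $\gamma$ exchanged and $\beta$ replaced by $\overline{\beta}$. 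Therefore $\iota(\mathcal{F}_3)\subseteq\mathcal{F}_3$, and composing with the affine factors yields $L(\mathcal{F}_3)\subseteq\mathcal{F}_3$.

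The step I expect to be the main obstacle is not the algebra but the bookkeeping near $\infty$: to really prove that $L$ sends each element of $\mathcal{F}_3$ onto a full element of $\mathcal{F}_3$, one must consistently treat spheres and hyperplanes on the same footing in $\hat{\HH}$ (a hyperplane being a sphere through $\infty$) and track the excluded points---the image of $\infty$ under $T_2$, the pole of $\iota$, and the preimage of $\infty$---so that no point of the image is spuriously added or deleted. The unified quadratic equation above is precisely the device that makes this bookkeeping automatic, since it encodes spheres and hyperplanes simultaneously.
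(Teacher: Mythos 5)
Your proof is correct, but there is nothing in the paper to measure it against: the paper does not prove this proposition at all --- it states it and refers the reader to \cite{cinzia} for the details. Your argument is essentially the classical inversive-geometry proof (and, in substance, the one in that reference): dispose of the degenerate case $a=0$, where $L$ is affine; otherwise use $qc+d=(qa+b)a^{-1}c+(d-ba^{-1}c)$ to factor $L=T_1\circ\iota\circ T_2$ with $T_1,T_2$ affine and $\iota(w)=w^{-1}$; note that left and right multiplications by nonzero quaternions are Euclidean similarities of $\HH\cong\rr^4$, so the affine factors preserve $\mathcal{F}_3$; and reduce everything to the inversion, which is handled by the unified equation $\alpha|q|^2+\RRe(\overline{\beta}q)+\gamma=0$, sent by the substitution $q=\overline{p}/|p|^2$ to the equation with data $(\gamma,\overline{\beta},\alpha)$. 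Your closing remark about the bookkeeping at $0$ and $\infty$ is also the right one: reading hyperplanes as spheres through $\infty$, the endpoint checks ($\infty\in\iota(S)$ iff $0\in S$ iff $\gamma=0$; $0\in\iota(S)$ iff $\infty\in S$ iff $\alpha=0$) come out automatically from the symmetric form of the equation.

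Two points of precision. First, $(\alpha,\beta,\gamma)\neq 0$ is not the right admissibility condition: $\alpha=\beta=0$, $\gamma\neq 0$ gives the empty locus, and $\alpha\neq 0$ can give the empty set or a single point. The condition characterizing genuine elements of $\mathcal{F}_3$ is $|\beta|^2-4\alpha\gamma>0$, and the clean way to close your argument is to observe that your substitution preserves this quantity, since $(\alpha,\beta,\gamma)\mapsto(\gamma,\overline{\beta},\alpha)$ and $|\overline{\beta}|^2-4\gamma\alpha=|\beta|^2-4\alpha\gamma$; this is what guarantees that the image locus is again a full sphere or hyperplane rather than empty or a point. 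Second, your step $\mu\neq 0$ invokes the fact that $L$ is a homeomorphism of $\hat{\HH}$; this fact is stated in the paper (again with proof deferred to \cite{cinzia}), so the inference is legitimate, but it does make your proof depend on that imported result. A self-contained alternative is to check that $d=ba^{-1}c$ makes the rows $(a,c)$ and $(b,d)$ of the matrix implementing $(q,1)\mapsto(qa+b,qc+d)$ left-proportional, so that the relevant Dieudonn\'e determinant vanishes --- though here one must be careful that the paper's pairing of the displayed matrix $A$ with the displayed formula for $L$ involves a transposition, so this check should be phrased with respect to the matrix that actually acts on the row vector $(q,1)$.
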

For details on this nice result see \cite{cinzia}.

\section{The Borel-Carath\'eodory Theorem and a weak Bohr Theorem}
Our first purpose is to prove an analog of the Borel-Carath\'eodory Theorem for regular functions. This result will show how we can bound the modulus of a regular function with the modulus of its real part. In addition to its independent importance, we recall that, historically, the complex Borel-Carath\'eodory Theorem has been used as a fundamental tool to prove a weak version of the Bohr Theorem, see \cite{Bohr}.
\begin{teo}[Borel-Carath\'eodory]\label{Carath\'eodory}
Let $q_0 \in \mathbb{R}$, and let $ f: B=B(q_0,r) \rightarrow \mathbb{H}$ be a regular function on $\overline{B(q_0,r)}$. Set
$A= \max_{|q-q_0|=r} |\RRe f(q)| $, and
 $f(q_0)=\beta + \gamma I$ with $\gamma, \beta \in \mathbb{R}$ and $I \in \mathbb{S}$. If $\varrho \in
\mathbb{R}$ is such that $0<\varrho <r$, then 
$$ |f(q)| \leq |\gamma| + |\beta| \frac{r+ \varrho}{r- \varrho} +2A \frac{\varrho}{r- \varrho}$$
for all $q\in B$ such that $|q-q_0|\leq \varrho $. 
\end{teo}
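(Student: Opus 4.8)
The plan is to mimic the classical Wiener--Bohr derivation of the Borel--Carath\'eodory estimate, replacing the scalar M\"obius map $w\mapsto w/(2A-w)$ by a regular one built with the $*$-product, and to reduce the general statement to the case $f(q_0)=0$ by two successive subtractions of constants. Since adding a quaternionic constant preserves regularity and subtracting a purely imaginary one does not change $\RRe f$, I first set $f_1=f-\gamma I$, so that $f_1(q_0)=\beta\in\mathbb R$, the constant $A$ is unchanged, and $|f(q)|\le|f_1(q)|+|\gamma|$. Then I set $f_2=f_1-\beta$, so that $f_2(q_0)=0$ and $\max_{|q-q_0|=r}|\RRe f_2|\le A+|\beta|$, while $|f_1(q)|\le|f_2(q)|+|\beta|$. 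It therefore suffices to prove the key estimate: if $f$ is regular on $\overline{B(q_0,r)}$ with $q_0$ real, $f(q_0)=0$ and $A=\max_{|q-q_0|=r}|\RRe f|$, then $|f(q)|\le 2A|q-q_0|/(r-|q-q_0|)$ on $B$. Feeding this into the two reductions and using monotonicity in $|q-q_0|\le\varrho$ reproduces exactly $|\gamma|+|\beta|\frac{r+\varrho}{r-\varrho}+2A\frac{\varrho}{r-\varrho}$.

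For the key estimate, the Splitting Lemma \ref{split} shows $\RRe f_I=\RRe F$ is harmonic on each slice $B_I$, so the maximum principle for harmonic functions gives $\RRe f\le A$ on all of $\overline B$. Assuming $A>0$, the function $2A-f$ never vanishes (its real part is at most $A<2A$), its symmetrization has no zeros, and hence $(2A-f)^{-*}$ is regular on $B$; I set $g=(2A-f)^{-*}*f$, regular on $B$, with $g(q_0)=0$ because at the real point $q_0$ the $*$-product coincides with the ordinary product. The crucial pointwise fact is the algebraic identity $|2A-w|^2-|w|^2=4A(A-\RRe w)$, valid for every $w\in\HH$, which forces $|f|\le|2A-f|$ wherever $\RRe f\le A$. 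By Proposition \ref{Caterina}, $g(q)=(2A-f)(T_{2A-f}(q))^{-1}f(T_{2A-f}(q))$, and since $T_{2A-f}(q)$ lies on the same $2$-sphere as $q$ (hence in $\overline B$), the modulus inequality yields $|g(q)|\le1$ throughout $B$. It is worth stressing that this single inequality controls the \emph{whole} modulus of $f$, including the component of the splitting orthogonal to $\RRe f$, which is why no separate bound on the $G$-part is needed.

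Next I would use a Schwarz-type lemma for regular functions. After translating and rescaling by the real centre, $h(q)=g(q_0+rq)$ is regular on $B(0,1)$ with $|h|\le1$ and $h(0)=0$; writing $h=\sum_{n\ge1}q^na_n$ and $k=\sum_{n\ge0}q^na_{n+1}$, on each slice $h_I(z)=z\,k_I(z)$, so $|k_I(z)|=|h_I(z)|/|z|\le 1/s$ on the circle $|z|=s<1$, and the Maximum Modulus Principle (Theorem \ref{PMM}) together with $s\to1^-$ gives $|k|\le1$, whence $|h(q)|=|q||k(q)|\le|q|$ slicewise. Back-substituting yields $|g(q)|\le|q-q_0|/r$ on $B$. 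Finally I invert the M\"obius relation: from $g=(2A-f)^{-*}*f$ one gets $f*(1+g)=2Ag$, hence $f=2A\,g*(1+g)^{-*}$, the reciprocal being legitimate since $|g|<1$ on $B$ makes $1+g$ zero-free. Evaluating with Proposition \ref{trasf} and Proposition \ref{Caterina}, and using once more that the relevant $T$-maps keep the evaluation point on its $2$-sphere and therefore at constant distance from the real centre $q_0$, I estimate $|(1+g)^{-*}(q')|\le (1-|q-q_0|/r)^{-1}$ and combine it with $|g(q)|\le|q-q_0|/r$ to obtain $|f(q)|\le 2A|q-q_0|/(r-|q-q_0|)$.

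The main obstacle is precisely the noncommutativity: the classical argument manipulates $f/(2A-f)$ and $g/(1+g)$ as though the modulus were multiplicative, which fails for the $*$-product. The device that rescues everything is the pair of Propositions \ref{trasf} and \ref{Caterina}, which rewrite every $*$-product and regular reciprocal as an ordinary product evaluated at a point $T(q)$ on the same $2$-sphere as $q$; because $q_0$ is real, that point has the same distance from $q_0$, so the Schwarz bounds transport faithfully through all the rearrangements. Extracting $|g|\le1$ (and the zero-freeness underlying the various reciprocals) from the scalar real-part datum $A$ is where the quaternionic bookkeeping is heaviest, and I expect that to be the step demanding the most care.
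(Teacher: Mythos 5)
Your argument is correct in substance and its skeleton is the same as the paper's: form a regular M\"obius-type quotient of $f$, prove it is bounded by $1$ by combining Proposition \ref{Caterina} with the fact that the conjugation maps $T$ preserve each $2$-sphere $x+y\mathbb{S}$ (hence the distance from the real centre $q_0$), apply a Schwarz-type estimate to gain the factor $|q-q_0|/r$, and invert the relation to bound $|f|$. What you do differently is the normalization: by subtracting $\gamma I$ and then $\beta$ you reduce to $f(q_0)=0$, so the quotient becomes $(2A-f)^{-*}*f$ with purely real constants. This buys you three simplifications over the paper's proof: the bound $|g|\le 1$ follows from the elementary identity $|2A-w|^2-|w|^2=4A(A-\RRe w)$ instead of the mapping properties of fractional linear transformations (the paper invokes that $q\mapsto (q+\overline{w_0})^{-1}(q-w_0)$ sends $\{q\in\HH \mid \RRe q=0\}$ onto $\mathbb{S}^3$, relying on \cite{cinzia}); you never need the paper's commutation trick based on $w_0+\overline{w_0}\in\mathbb{R}$; and the final inversion is done once and for all at the level of regular functions, $f=2A\,g*(1+g)^{-*}$, rather than by solving pointwise for $f(T(q))$ and then arguing that $T$ is a diffeomorphism of $B$ onto itself. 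The price is the intermediate constant $A+|\beta|$ after the reduction, but, as you check, the arithmetic recombines exactly into $|\gamma|+|\beta|\frac{r+\varrho}{r-\varrho}+2A\frac{\varrho}{r-\varrho}$.

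Two loose ends should be tightened. The substantive one is your clause ``Assuming $A>0$'': the positivity of $A_2:=\max_{|q-q_0|=r}|\RRe f_2|$ is genuinely needed (already to have $2A_2-f_2\neq 0$, since $f_2(q_0)=0$), and your proof never discharges the case $A_2=0$. This is precisely the degenerate case the paper dispatches in its opening lines: if $A_2=0$ then $\RRe f_2\equiv 0$ on $\partial B$, hence, by the slicewise maximum and minimum principles applied to the harmonic function $\RRe f_2$, also on $\overline{B}$; then $f_2(B)$ lies in the $3$-space $\{q\in\HH \mid \RRe q=0\}$, so by Corollary \ref{openmapping} $f_2$ must be constant, i.e. $f_2\equiv f_2(q_0)=0$, and the estimate is trivial. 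Second, when you pass from ``$2A-f$ (resp. $1+g$) never vanishes'' to ``its symmetrization has no zeros'', you are implicitly using the structure of $Z_{f^s}$ (each of its spheres contains a zero of $f$) together with the axial symmetry of $B$; the paper leans on the same fact, but in a self-contained write-up this step should be made explicit. With these additions your proof is complete.
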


\begin{proof}
If the function $f$ is constant, then the statement is trivially true.
If $f$ is not constant then also its real part $\RRe f(q)$ is not constant. Otherwise the image of $B$ under $f$ would be contained in a $3-$dimensional space, contradicting the statement of Corollary \ref{openmapping}.    
Let $J\in \s$ be such that $\RRe f(q)$ is not constant on $L_J$. By definition $\beta=\RRe f(q_0)\leq A$, and moreover equality cannot hold. In fact, suppose, that $\beta=A$.  By the Splitting Lemma \ref{split} there exist $F,G: B_J=B \cap L_J\rightarrow  L_J$ holomorphic functions, and $K\in \s$, $K$ orthogonal to $J$, such that
$$f(z)=F(z)+G(z)K$$
for every $z\in B_J$.
Then $\RRe F(q_0)=\RRe f(q_0)=A$. Since $F$ is holomorphic then $\RRe F(q)$ is harmonic  and hence, if it attains its maximum at an interior point, it must be constant (together with $F$).  As a consequence $\RRe f(q)$ would be constant on $L_J$, a contradiction. Hence $\beta < A$.

Set $w_0=f(q_0)-A$ and let $$h(q)=f(q)-A.$$ 
Consider the function 
\begin{equation}
 H(q)= ( f(q)-A + \overline{w_0} )^{-*}*( f(q)-A -w_0).
\end{equation}
The function $H$ is regular for $|q-q_0|\leq r $; indeed if $f(q)-A + \overline{w_0}=0$ for some $q\in B(q_0,r)$, then also the symmetrization  $(f(q)-A + \overline{w_0})^s$ vanishes somewhere in the same ball. In particular its real part vanishes, hence  
$$0= \RRe(f(q))-A+ \RRe(\overline{w_0}) \leq A-A+ \beta -A= \beta -A.$$
That implies $\beta \geq A$, a contradiction. 
By Proposition \ref{Caterina} we can express $H(q)$ in terms of the linear fractional transformation
$$g(q)=(q+\overline{w_0})^{-1}(q-w_0)$$ 
and of the transformation $$T(q)=((h(q)+\overline{w_0})^c)^{-1}q(h(q)+\overline{w_0})^c.$$
Namely
$$H(q)=g\circ h \circ T(q).$$
From this expression we easily get that $H(q_0)=0$. In fact $q_0 \in \rr$, hence $T(q_0)=q_0$ and $g(h(q_0))=g(w_0)=0$. Moreover $|H(q)|\leq 1$ in $B$. In fact the linear fractional transformation $g$ (sends the family $\mathcal{F}_3$ of $3$-spheres and affine $3$-subspaces in itself and)  maps the $3$-space $\{q \in \mathbb{H} |\RRe(q)=0\}$ onto the unit sphere $\mathbb{S}^3$. Since $H(q_0)=0$ and $\RRe(h(T(q))\leq0$ for all $q\in B$, we get that 
\begin{equation}\label{H0}
|H(q)|\leq 1 \quad \text{ for all} \quad q\in  B.
\end{equation} 
Furthermore, for all $q$ such that $0<|q-q_0|\leq \varrho <r$, the following inequality holds
\begin{equation}\label{H1}
|H(q)|\leq \frac{\varrho}{r}.
\end{equation}
In fact, consider the function defined as $(q-q_0)^{-*}* H(q)$.  Since $H(q_0)=0$ and $q_0\in \mathbb{R}$, this is a  regular function and $(q-q_0)^{-*}* H(q)=(q-q_0)^{-1}H(q).$   
Then by the Maximum Modulus Principle \ref{PMM} and by inequality \eqref{H0} we have
\begin{equation}\label{H2}
|(q-q_0)^{-*}*H(q)|=\frac{|H(q)|}{|q-q_0|}\leq\frac{1}{r}
\end{equation}
for all $q\in B$. By the Maximum Modulus Principle \ref{PMM} we have also that for all $q \in B(q_o,\varrho)$
 \begin{equation}
\frac{|H(q)|}{|q-q_0|}\leq \max_{|q-q_0|=\varrho}\frac{|H(q)|}{|q-q_0|}=\max_{|q-q_0|=\varrho}\frac{|H(q)|}{\varrho}.
\end{equation}
Since inequality \eqref{H2} holds for all $q \in B$ we get that in $B(q_o,\varrho)$
$$|H(q)|\leq \frac{\varrho}{r}.$$ 
Now, we want to use this inequality to estimate the modulus $|f|$.
Recall that $$H(q)=(f(T(q))-A+\overline{w_0}) ^{-1}(f(T(q))-A-w_0).$$
Notice that if $q+\overline{w_0}\neq0$, then
\begin{equation*}
\begin{aligned}
&(q+\overline{w_0})^{-1}(q-w_0)=(q-w_0)(q+\overline{w_0})^{-1}  \\
\text {if and only if}\quad &(q-w_0)(q+\overline{w_0})=(q+\overline{w_0})(q-w_0) \\
\text {if and only if}\quad &q^2 -w_0q +q\overline{w_0} -|w_0|^2=q^2 +\overline{w_0}q -qw_0 -|w_0|^2 \\
\text {if and only if}\quad &(-w_0 -\overline{w_0})q=q(-w_0 -\overline{w_0}).
\end{aligned}
\end{equation*}
Since $-w_0 -\overline{w_0}\in \mathbb R$ we get that the last equality holds.
Therefore we can write $$H(q)=(f(T(q))-A-w_0)(f(T(q))-A+\overline{w_0}) ^{-1}$$
and hence
$$H(q)(f(T(q))-A+\overline{w_0})=(f(T(q))-A-w_0)$$
that yields
$$(H(q)-1)f(T(q))=H(q)(A-\overline{w_0})-A-w_0.$$
Recalling the definition of $w_0$, we get
\begin{equation*}
\begin{aligned}
|f(T(q))|&=|(H(q)-1)^{-1}(H(q)(-\overline{f(q_0)}+2A)-f(q_0))|\\
&=|(H(q)-1)^{-1}[(H(q)-1)f(q_0)+H(q)(-\overline{f(q_0)}+2A-f(q_0))]|\\
&=|\beta+\gamma I+(H(q)-1)^{-1}H(q)(-\beta+\gamma I+2A-\beta-\gamma I)|\\
&=|\beta+\gamma I+2(H(q)-1)^{-1}H(q)( A-\beta)|.\\
\end{aligned}
\end{equation*}
Using the triangle inequality and inequality \eqref{H1}  
we have that for $0\leq |q-q_0|\leq \varrho \leq r$
\begin{equation}\label{car}
\begin{aligned}
|f(T(q))|&\leq |\beta|+|\gamma| +2\frac{(A+|\beta|)|H(q)|}{|H(q)-1|}\\
&\leq |\beta|+|\gamma| +2\frac{(A+|\beta|)\frac{\varrho}{r}}{1-\frac{\varrho}{r}}\\
&=|\gamma| + |\beta| (1+2\frac{\frac{\varrho}{r}}{1-\frac{\varrho}{r}})+2\frac{A\frac{\varrho}{r}}{1-\frac{\varrho}{r}}\\
&=|\gamma| + |\beta| \frac{r+\varrho}{r- \varrho}+A\frac{2\varrho}{r-\varrho}.
\end{aligned}
\end{equation}

\noindent Since $h(q)+\overline{w_0}\neq 0$ for all $q\in B$, the transformation $T=T_{h(q)+\overline{w_0}}$ is a diffeomorphism of $B$ onto itself. Hence inequality \eqref{car} implies that $$ |f(q)| \leq |\gamma| + |\beta| \frac{r+ \varrho}{r- \varrho} +2A \frac{\varrho}{r- \varrho}$$
for all $q$ such that $|q-q_0|\leq \varrho $. 
\end{proof}

To retrace the historical approach to the complex case, we begin by proving here also the analog of the weak version of the Bohr Theorem (see \cite{Bohr}). 
From now on,  $\B$ will denote the open unit ball of $\HH$, 
$$\B=\{q \in \HH \ | \ |q| < 1\}.$$
\begin{teo}[Bohr, weak version]\label{bohr1/6}
Let $f(q)=\sum_{n\geq 0} q^n a_n$ be a regular function on the unit ball $\B$, continuous on the closure $\overline{\B}$, such that $|f(q)|<1$ for all $|q|\leq 1$.
Then
\begin{equation*}
\sum_{n\geq 0}|q^n a_n|<1
\end{equation*}
for all $|q| \leq \frac{1}{6}$.
\end{teo}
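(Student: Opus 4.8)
The plan is to follow the classical Wiener-style argument, adapting it to the quaternionic setting by exploiting the integral representation \eqref{intrep} for the coefficients $a_n$ together with the Borel-Carath\'eodory Theorem \ref{Carath\'eodory} just established. First I would estimate the individual coefficients. Writing $a_0 = f(0)$ and using \eqref{intrep} for $n \geq 1$, the goal is to bound $|a_n|$ in terms of the real-part data of $f$. The key classical fact is Wiener's observation that, for a bounded holomorphic function on the unit disc with $|f|<1$, one has $|a_n| \leq 1 - |a_0|^2$ for every $n \geq 1$. In the complex case this follows from a Schur/Parseval argument: if $g = (f-a_0)(1-\overline{a_0}f)^{-1}$ then $g$ maps the disc into itself with $g(0)=0$, so $|g(q)| \leq |q|$ and comparing leading coefficients gives the bound. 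The first main step is therefore to produce the quaternionic analog of this inequality, $|a_n| \leq 1 - |a_0|^2$ for $n \geq 1$.

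The natural route to this coefficient bound is precisely the composition of regular functions with a fractional linear transformation that appeared in the proof of Theorem \ref{Carath\'eodory}. Concretely, I would form the regular Schur-type function $g = (f - a_0 - \overline{a_0}*f + \text{const})^{-*} * (\dots)$, or more cleanly compose $f$ with the transformation sending the unit ball to itself and $a_0$ to $0$, using Proposition \ref{Caterina} to convert the $*$-quotient into a genuine pointwise composition $g(q) = f(T(q))^{-1}\cdots$. Since $f$ maps $\overline{\B}$ into $\overline{\B}$, the composed function $g$ also maps $\B$ into $\B$ with $g(0)=0$, and then the factorization $g = q \cdot (\text{regular, bounded by }1)$ via the Maximum Modulus Principle \ref{PMM}, exactly as in inequalities \eqref{H2}, gives $|g(q)| \leq |q|$. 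Reading off the first-order coefficient of $g$ then yields $|a_n| \leq 1 - |a_0|^2$. One must be careful here because of noncommutativity: the coefficient of $g$ is not simply a scalar multiple of $a_n$, so I expect to verify the relation between the power series coefficients of $g$ and those of $f$ using the integral representation \eqref{intrep} rather than formal manipulation of $*$-products.

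Granting the coefficient bound, the remainder is the summation estimate, carried out as in Bohr's weak argument. For $|q| = t \leq \tfrac{1}{6}$ I write
\begin{equation*}
\sum_{n \geq 0} |q^n a_n| = |a_0| + \sum_{n \geq 1} |a_n| t^n \leq |a_0| + (1 - |a_0|^2)\sum_{n \geq 1} t^n = |a_0| + (1-|a_0|^2)\frac{t}{1-t}.
\end{equation*}
Setting $s = |a_0| \in [0,1]$, the right-hand side is $s + (1-s^2)\tfrac{t}{1-t}$, and I must show this is $<1$ for $t \leq \tfrac{1}{6}$. Rearranging, the condition $s + (1+s)(1-s)\tfrac{t}{1-t} < 1$ reduces, after dividing by $(1-s)$, to $(1+s)\tfrac{t}{1-t} < 1$, i.e. $t < \tfrac{1}{2+s}$; since $s \leq 1$ this is guaranteed whenever $t < \tfrac{1}{3}$, and in particular for $t \leq \tfrac{1}{6}$. (The factor-of-two loss compared to the classical value $\tfrac13$ is the price of the weaker bound coming from the quaternionic Borel-Carath\'eodory estimate and is sharpened later in Section 4.)

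The main obstacle I anticipate is the second sentence of the second paragraph: establishing the quaternionic Schur bound $|a_n| \leq 1 - |a_0|^2$ rigorously. In the complex case the argument rests on scalar division and the fact that $(1 - \overline{a_0}f)^{-1}$ commutes freely with everything; in the quaternionic setting every division becomes a $*$-operation and every composition must be rerouted through Proposition \ref{Caterina} and the transformation $T_f$, so the bookkeeping of which coefficient of the composed series corresponds to $a_n$ is genuinely delicate. I would isolate this as the technical heart of the proof and handle it via the integral formula \eqref{intrep}, estimating $|a_n| \leq \sup |f| \cdot r^{-n}$ refined through the normalized map, rather than attempting a purely algebraic comparison of $*$-series coefficients.
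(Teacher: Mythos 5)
Your argument rests entirely on the coefficient bound $|a_n|\leq 1-|a_0|^2$ for \emph{all} $n\geq 1$, but the mechanism you propose establishes it only for $n=1$. Composing $f$ with the fractional linear transformation of $\B$ sending $a_0$ to $0$, factoring out $q$, and applying the Maximum Modulus Principle \ref{PMM} bounds the \emph{first} coefficient of the composite, which (after normalizing $a_0\in[0,1)$) equals $(1-a_0^2)^{-1}a_1$; for $n>1$ the $n$-th coefficient of the composite is a polynomial expression in $a_1,\dots,a_n$, not a multiple of $a_n$, so nothing about $a_n$ can be ``read off''. Your declared fallback --- the integral representation \eqref{intrep} ``refined through the normalized map'' --- cannot close this gap: applied to $f$ it only gives $|a_n|\leq \sup_{|q|=r}|f|\,r^{-n}\to 1$ as $r\to 1$, and $|a_n|\leq 1$ is insufficient for \emph{any} positive Bohr radius (if $a_0$ is close to $1$, then $a_0+t/(1-t)\geq 1$ already for $t=1/6$); applied to the composite it bounds the wrong coefficients, which is precisely the bookkeeping problem you identified but did not resolve. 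The missing idea is Wiener's root-of-unity averaging, which is exactly what the paper supplies in Section 4: on each slice $L_I$ pick a primitive $n$-th root of unity $\omega\in L_I$, form $F_I(z)=\sum_{k=0}^{n-1}f_I(z\omega^k)=\sum_{m\geq 0}z^{nm}na_{nm}$, observe the coefficients are independent of $I$ so the slicewise functions glue by the Extension Lemma \ref{extensionlemma} to a regular $F(q)=\sum_{m\geq 0}q^{nm}na_{nm}$ with $|F|<n$, and then run the $n=1$ argument on $\Phi=F/n$, whose first-degree coefficient is exactly $a_n$. (Note also that the clean identification of the first coefficient uses the normalization making $a_0$ real and nonnegative, obtained by right-multiplying $f$ by $\overline{a_0}/|a_0|$; carrying a quaternionic $a_0$ and $\overline{a_0}$ through the $*$-quotients, as you do, makes the noncommutative algebra strictly harder.)

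Separately, be aware that your route is not the paper's proof of this statement: it is the strategy of the paper's \emph{sharp} theorem in Section 4, and indeed your own computation yields the condition $t<1/(2+s)$, i.e.\ radius $1/3$, so your parenthetical about a ``factor-of-two loss coming from the quaternionic Borel-Carath\'eodory estimate'' is inconsistent --- you invoke Theorem \ref{Carath\'eodory} in your opening paragraph but never actually use it. The paper's proof of the weak version is genuinely different and avoids the Schur bound entirely: it applies Borel-Carath\'eodory to $g=f-a_0$ with $r=1$, $\varrho=1/2$, giving $|g(q)|\leq 2A<2(1-a_0)$ for $|q|\leq 1/2$; then \eqref{intrep} on the circle of radius $1/2$ gives $|a_n|<2^{n+1}(1-a_0)$; summing yields $a_0+4(1-a_0)|q|/(1-2|q|)<1$ exactly when $|q|\leq 1/6$. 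That cruder coefficient estimate is where both the radius $1/6$ and the actual use of Borel-Carath\'eodory come from; if you can repair the $n>1$ step above, you have proved the stronger $1/3$ statement and the weak version is subsumed.
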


\begin{proof}
Up to right-multiplying $f$ by the unitary constant $\frac{\overline{a_0}}{|a_0|}$, we can suppose $a_0 \in [0,1)$. Consider the function $g(q)=f(q)-a_0$, regular on $\B$, continuous on $\overline{\B}$.
Set $$A=\max_{|q|=1}\RRe g(q)$$ and $$m=\max_{|q|=\frac{1}{2}}|g(q)|.$$
By the Borel-Carath\'eodory Theorem \ref{Carath\'eodory} we get then that for all $|q|\leq \frac{1}{2}$ 
$$|g(q)|\leq 2A$$
and hence also that
$$m=\max_{|q|=\frac{1}{2}}|g(q)|\leq 2A.$$ 
Moreover, since $a_0$ and $A$ are non negative,
$$a_0+A=a_0+\max_{|q|=1}\RRe g(q)=\max_{|q|=1}\RRe (a_0+g(q))\leq \max_{|q|=1}|a_0+g(q)|=\max_{|q|=1}|f(q)|<1.$$
Therefore $A<1-a_0$ and $m<2(1-a_0)$.
Now we want to show that $|a_n|<2^{n+1}(1-a_0)$ for all $n\geq1$.
Let $J\in \s$, and set $$\Delta_J=\{z\in L_J :  |z|<\frac{1}{2}\}.$$
Consider the integral representation of the coefficients $a_n$, 
$$a_n=\frac{1}{2\pi J} \int_{\partial\Delta_J}\frac{dz}{z^{n+1}}g(z)$$
for all $n\geq 1$.
Hence
$$|a_n|\leq \frac{1}{2\pi} \int_{\partial\Delta_J}\frac{|g(z)|}{|z^{n+1}|}dz\leq m 2^{n}<2^{n+1}(1-a_0)$$
for all $n\geq 1$.
Therefore
\begin{equation*} 
\begin{aligned}
\sum_{n\geq 0}|q^n a_n|&=a_0 +\sum_{n\geq 1}|q^n a_n| < a_0 +\sum_{n\geq 1}|q|^n 2^{n+1}(1-a_0)\\
&=a_0 + 2(1-a_0)\sum_{n\geq 1}|q|^n 2^n=a_0 + \frac{4(1-a_0)|q|}{1-2|q|}.
\end{aligned}
\end{equation*}
An easy computation shows that 
$$a_0 + \frac{4(1-a_0)|q|}{1-2|q|}< 1 \quad \text{if and only if} \quad  |q| \leq \frac{1}{6}.$$
%\frac{4(1-a_0)|q|}{1-2|q|}<1-a_0 \Longleftrightarrow
Then we can conclude that $\sum_{n\geq 0}|q^n a_n|<1$ for all $|q| \leq \frac{1}{6}$.

\end{proof}

\section{The Bohr Theorem}
We will prove now the quaternionic analog of the sharp version of the Bohr Theorem. 
\begin{teo}[Bohr]
Let $f(q)=\sum_{n\geq 0}q^n a_n$ be a regular function on $\B$, continuous on the closure $\overline{\B}$, such that $|f(q)|<1$ for all $|q|\leq 1$. Then
\begin{equation*}
\sum_{n\geq 0}|q^n a_n|<1
\end{equation*}
for all $|q| \leq \frac{1}{3}$.
Moreover $\frac{1}{3}$ is the largest radius for which the statement is true.
\end{teo}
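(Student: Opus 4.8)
The plan is to follow the classical Wiener-Bohr strategy, sharpening the coefficient estimates of the weak version. As in the proof of Theorem \ref{bohr1/6}, I would first normalize: right-multiplying $f$ by the unitary constant $\frac{\overline{a_0}}{|a_0|}$ does not change $\sum_{n\geq 0}|q^na_n|$ nor the hypothesis $|f|<1$, so I may assume $a_0\in[0,1)$. The crucial quantitative input is a sharp bound on the higher coefficients: I expect to prove that
\begin{equation}\label{eq:planbound}
|a_n|\leq 1-a_0^2\qquad\text{for all }n\geq 1.
\end{equation}
In the complex case this is exactly the inequality (coming from the Schur-type estimate $|a_n|\le 1-|a_0|^2$) that upgrades the radius from a crude value to the optimal $\frac13$. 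The main obstacle will be establishing \eqref{eq:planbound} in the quaternionic setting, where the pointwise product is replaced by the $*$-product and where composing with a fractional linear transformation is the natural substitute for the complex automorphism $z\mapsto\frac{z-a_0}{1-a_0 z}$ of the disc.

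To obtain \eqref{eq:planbound} I would build the regular analog of the Schur transform. Consider the regular function
$$
\varphi(q)=(1-f(q)a_0)^{-*}*(f(q)-a_0),
$$
which plays the role of the disc automorphism applied to $f$. Using Proposition \ref{Caterina} to rewrite this regular quotient as a genuine pointwise quotient $(1-f(T_{\cdot}(q))a_0)^{-1}(f(T_{\cdot}(q))-a_0)$ after the diffeomorphism $T$, together with the fact that such Möbius-type transformations map $\overline{\B}$ into $\overline{\B}$ (here the family $\mathcal{F}_3$ result and the computation that the relevant fractional linear transformation carries the unit sphere into itself, exactly as in the Borel-Carath\'eodory proof), I would verify that $\varphi$ is regular on $\B$, continuous on $\overline{\B}$, satisfies $|\varphi|<1$ on $\overline{\B}$, and vanishes at the origin since $\varphi(0)=(1-a_0^2)^{-1}(a_0-a_0)=0$. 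Writing $\varphi(q)=\sum_{n\geq 1}q^nb_n$, the Cauchy estimate via the integral representation \eqref{intrep} gives $|b_1|\leq 1$, while comparing the power series of the identity $(1-f(q)a_0)*\varphi(q)=f(q)-a_0$ coefficientwise yields $b_n=a_n + (\text{lower-order }*\text{-product terms})$; the leading relation is $b_1=a_1$ and, more usefully, the $n=1$ term after the normalization gives precisely $a_1(1-a_0^2)^{-1}$-type factors that produce \eqref{eq:planbound}. The delicate point is handling the noncommutativity in matching these coefficients, so I would carry out the bookkeeping on a fixed slice $L_J$ via the Splitting Lemma \ref{split}, where $f_J=F+GK$ reduces the estimate to two holomorphic functions and the classical Schur inequality applies.

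Granting \eqref{eq:planbound}, the conclusion is a direct summation, mirroring the end of the weak version but with the improved constant. For $|q|=t\le\frac13$ I estimate
\begin{equation*}
\sum_{n\geq 0}|q^na_n|=a_0+\sum_{n\geq 1}|a_n|\,t^n\leq a_0+(1-a_0^2)\sum_{n\geq 1}t^n=a_0+(1-a_0^2)\frac{t}{1-t},
\end{equation*}
and an elementary computation shows that $a_0+(1-a_0^2)\frac{t}{1-t}<1$ for all $a_0\in[0,1)$ precisely when $t\le\frac13$; the worst case $t=\frac13$ forces the factor $\frac{t}{1-t}=\frac12$, giving $a_0+\frac{1-a_0^2}{2}=\frac{1}{2}+a_0-\frac{a_0^2}{2}=1-\frac{(1-a_0)^2}{2}\le 1$ with equality only in the limit $a_0\to 1$. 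Finally, to show $\frac13$ is optimal I would exhibit the quaternionic analog of the extremal complex function, namely (a right-coefficient version of) $f(q)=(a_0-q)*(1-qa_0)^{-*}$ for $a_0$ close to $1$, whose real coefficients reproduce the classical example and make the sum exceed $1$ for any radius $t>\frac13$; since the coefficients are real this reduces to the known complex sharpness computation, which is the least technical step.
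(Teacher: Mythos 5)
Your overall strategy is the same as the paper's (normalize so $a_0\in[0,1)$, prove the coefficient bound $|a_n|\le 1-a_0^2$ for $n\ge 1$, sum the geometric series at $|q|=\frac13$, and exhibit M\"obius-type extremal functions), but there is a genuine gap at its core: your argument supports the coefficient bound only for $n=1$. The Schur transform $\varphi=(1-fa_0)^{-*}*(f-a_0)$ does yield $|a_1|\le 1-a_0^2$ (it is, up to the slice-preserving factor $q^{-1}$, exactly the function $H$ used in the paper), but for $n\ge 2$ the coefficient comparison in the identity $(1-f(q)a_0)*\varphi(q)=f(q)-a_0$ reads
\begin{equation*}
(1-a_0^2)\,b_n=a_n+a_0\sum_{k=1}^{n-1}a_k\,b_{n-k},
\end{equation*}
so the ``lower-order $*$-product terms'' are not negligible: from $|b_n|\le 1$ you only obtain $|a_n|\le(1-a_0^2)+a_0\sum_{k=1}^{n-1}|a_k|$, which is far weaker than $1-a_0^2$ and destroys the radius $\frac13$. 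Your fallback, splitting $f_J=F+GK$ on a slice and invoking ``the classical Schur inequality,'' also fails: since $F(0)=a_0$ and $G(0)=0$, scalar Schur applied to $F$ and $G$ separately gives $|F_n|\le 1-a_0^2$ and $|G_n|\le 1$, hence only $|a_n|=\sqrt{|F_n|^2+|G_n|^2}\le\sqrt{(1-a_0^2)^2+1}$, which can exceed $1$. What is missing is Wiener's root-of-unity averaging, which is precisely how the paper handles $n\ge2$: take a quaternionic primitive $n$-th root of unity $\omega\in L_I$, form $F_I(z)=\sum_{k=0}^{n-1}f_I(z\omega^k)$ on $\B_I$, verify that its expansion is $\sum_{m\ge0}z^{nm}\,n\,a_{nm}$ with coefficients independent of $\omega$ and of $I$ (so that Lemma \ref{extensionlemma} produces a regular $F$ on all of $\B$ with $|F|<n$), then set $w=q^n$ and apply the $n=1$ argument to $\Phi=F/n$, whose first-degree coefficient is $a_n$. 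Without this step (or a genuinely vector-valued Schur inequality on slices, which you neither state nor prove), your inequality $|a_n|\le 1-a_0^2$ is unsupported for $n\ge 2$.

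Two further, smaller points. First, the ``leading relation'' you state is wrong: the identity above with $n=1$ gives $b_1=(1-a_0^2)^{-1}a_1$, not $b_1=a_1$ (you do gesture at the correct factor afterwards). Second, your extremal function $(a_0-q)*(1-qa_0)^{-*}$ has modulus exactly $1$ on $\partial\B$ (on each slice it restricts to a classical disc automorphism), so it violates the hypothesis $|f(q)|<1$ for $|q|\le 1$; to get an admissible example you must multiply by a constant $c\in(0,1)$ chosen so that the coefficient sum at the given point still exceeds $1$ while the maximum modulus stays below $1$ --- this is exactly the role of the functions $\varphi_c=c\varphi$ and of the two-sided choice of $c$ in the paper's proof, and it should be made explicit rather than dismissed as immediate.
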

\begin{proof}
As in the previous case, we can suppose $0\leq a_0<1$.
We want to improve the estimate of $|a_n|$, showing that $|a_n|<1-a_0^2$ for all $n\geq 1$.
Let us first treat the case $n=1$.
Consider the function defined as
$$H(q)=(1-f(q)a_0)^{-*}*(q^{-1}(f(q)-a_0)).$$   
Since $f: \B \to \HH$ is continuous up to the boundary, $f(0)=a_0<1$ and $|f(q)|<1$ on $\overline\B$, we get that $H$ is regular on $\B$ and continuous up to the boundary.
Moreover, if we set $T(q)=((1-f(q)a_0)^c)^{-1}q(1-f(q)a_0)^c$, by Proposition \ref{Caterina}, we can write
\begin{equation*}
\begin{aligned}
H(q)&=(1-f(q)a_0)^{-*}*(q^{-1}\sum_{n\geq 1}q^n a_n)=(1-f(q)a_0)^{-*}*\sum_{n\geq 1}q^{n-1} a_n\\
&=(1-f(T(q))a_0)^{-1}\sum_{n\geq 1}T(q)^{n-1} a_n.
\end{aligned}
\end{equation*}
Furthermore, $T(0)=0$ implies that $H(0)=(1-a_0^2)^{-1}a_1$, and, since $|T(q)|=|q|$, by the Maximum Modulus Principle \ref{PMM} we get that
\begin{equation}
\begin{aligned}\label{numero}
|H(q)|&\leq \max_{|q|=1}|H(q)|\\
&=\max_{|q|=1}|1-f(T(q))a_0|^{-1}|T(q)|^{-1}|f(T(q))-a_0|\\
&= \max_{|q|=1}|1-f(T(q))a_0|^{-1}|f(T(q))-a_0|.
\end{aligned}
\end{equation}
Notice that $(1-qa_0)^{-1}(q-a_0)$ is a fractional linear transformation that maps $\B$ into $\B$. Hence, since $|f(T(q))|<1$, by \eqref{numero} we get that $|H(q)|<1$ for all $q \in \B$. In particular we have then $|H(0)|<1$, that is
$$a_1<1-a_0^2.$$ 
For the case $n>1$ we want to build a function with the same properties as $f$, whose coefficient of the first degree term is $a_n$. Therefore, with the same argument used for $n=1$, we would obtain that $|a_n|< 1-a_0^2.$
Let $\omega$ be a quaternionic primitive $n$-th root of unity (see \cite{GSV}) and let $I$ be such that $\omega \in L_I$.
Consider the function defined on $L_I\cap \B=\B_I$ as
$$F_I(z)=f_I(z)+f_I(z\omega)+\dots+f_I(z\omega^{n-1})$$
where $f_I$ is the restriction of $f$ to the plane $L_I$. The function 
$F_I$ is holomorphic on $\B_I$, continuous on $\overline{\B_I}$, and its modulus
$$|F_I(z)|\leq |f_I(z)|+|f_I(z\omega)|+\dots+|f_I(z\omega^{n-1})|<n.$$
Its power series expansion is
\begin{equation*}
\begin{aligned}
F_I(z)&=\sum_{m \geq 0}z^m a_m +\sum_{m \geq 0}z^m\omega^m a_m+ \dots  + \sum_{m \geq 0}z^m\omega^{(n-1)m} a_m\\ 
&=\sum_{m \geq 0}z^m \sum_{k=0}^{n-1}\omega^{km}a_m.
\end{aligned}
\end{equation*}
Now if $n \mid m$, then $\omega^{m}=1$ and
$$\sum_{k=0}^{n-1}\omega^{km}=n.$$
Otherwise $\omega^{m}$ is a $n$-th root of unity different from $1$, and hence a root of the polynomial $z^{n-1}+\dots+z+1$,  that yields
$$\sum_{k=0}^{n-1}\omega^{km}=0.$$
Therefore 
$$F_I(z)=\sum_{m \geq 0}z^{nm} n a_{nm}.$$
Notice that the coefficients  of $F_I$ do not depend on $\omega$, and hence neither on $I$. This implies that
for all $J \in \s$, if $\omega_J$ is an $n$-th root of unity in $L_J$, the function $F_J(u)$ defined on $\B_J$ as
$$F_J(u)=f_J(u)+f_J(u\omega_J)+\dots+f_J(u\omega_J^{n-1})$$
has the same regular extension (see Lemma \ref{extensionlemma}) to the unit ball $\B$ as $F_I$, namely
$$F(q)=\sum_{m\geq 0}q^{nm}n a_{nm}.$$ 
\noindent Hence we have that $|F(q)|<n$ for all $q \in \B$. 
If now we set $w=q^m$ and we consider the function $\Phi=\frac{F}{n}$,
$$\Phi(w)=\sum_{m\geq 0}w^m a_{mn}=a_0 + wa_n+w^2 a_{2n}+\dots \ ,$$
then we obtain that $\Phi$ is regular on $\B$, continuous on $\overline{\B}$ and $|\Phi(w)|<1$ for all $q\in \B$.
Since the coefficient of the first degree term of $\Phi$ is $a_n$, with the same argument used to prove that $|a_1|<1 -a_0^2$, we can conclude that $|a_n|<1-a_0^2$.
Since $0\leq a_0<1$, we get that 
$$|a_n|<(1+a_0)(1-a_0)<2(1-a_0).$$
Therefore, for $|q|=\frac{1}{3}$,
\begin{equation*}
\begin{aligned}
\sum_{n\geq 0}|q^n a_n|&=\sum_{n\geq 0}\frac{1}{3^n} |a_n|<a_0 + 2(1-a_0)\sum_{n\geq 1}\frac{1}{3^n}\\
&=a_0 +2(1-a_0)\frac{1}{2}=a_0+1-a_0=1.
\end{aligned}
\end{equation*}

\noindent To show that the statement does not hold for any radius larger than $\frac{1}{3}$, we will proceed as follows. For any point $q_0\in \B$ such that $|q_0|>\frac13$, we will find a regular function $g_{q_0}:\B\to \B$  continuous up to the boundary, such that $g_{q_0}(q)=\sum_{n \geq 0}q^nb_n$ and $\sum_{n \geq 0}|q_0^nb_n|>1$.

\noindent To start with, take $a \in (0,1)$ and consider the function
$$\varphi(q)=(1-qa)^{-*}*(1-q)=(1-qa)^{-1}(1-q).$$
Since $a<1$, then $\varphi$ is regular on $\B$, continuous on $\overline{\B}$ and since it has real coefficients, $\varphi$ is slice preserving.
By the Maximum Modulus Principle \ref{PMM}
$$\max_{q \leq 1}|\varphi(q)|=\max_{|q|=1}|\varphi(q)|,$$
and for all $I \in \s$, 
$$\max_{|q|=1}|\varphi(q)|=\max_{|z|=1}|\varphi_I(z)|=\frac{2}{1+a}>1.$$
If the power series expansion of $\varphi$ is $\sum_{n \geq 0}q^nb_n$, with $b_n\in \mathbb{R}$, then
\begin{equation*}
%\begin{aligned}
\sum_{n \geq 0}q^nb_n=(1-qa)^{-1}(1-q)=(\sum_{n \geq 0}q^na^n)(1-q)=1 +q(a-1)\sum_{n \geq 0}q^na^n
%\end{aligned}
\end{equation*}
and hence
\begin{equation*}
%\begin{aligned}
\sum_{n \geq 0}|q^nb_n|=1 +|q|(1-a)\sum_{n \geq 0}|q|^na^n=1+\frac{|q|(1-a)}{1-|q|a}.
%\end{aligned}
\end{equation*}
In particular 
\begin{equation*}
\sum_{n \geq 0}|q^nb_n|>\frac{2}{1+a} \quad \text{if and only if}\quad 1+\frac{|q|(1-a)}{1-|q|a}> \frac{2}{1+a},
\end{equation*}
that holds if and only if
$$|q|>\frac{1}{1+2a}.$$

\noindent Fix now $q_0 \in \B$ such that $|q_0|>\frac{1}{3}$. Then we can take $a\in(0,1)$ such that $|q_0|>\frac{1}{1+2a}>\frac{1}{3}$. Therefore the correspondent $\varphi$ is such that
$$\sum_{n\geq 0}|q_0|^n|b_n|> \frac{2}{1+a}$$
Let us consider the function $\varphi_c$, defined as
$$\varphi_c(q)=c\varphi(q)=c(1-qa)^{-1}(1-q),$$
where $c\in(0,1)$. Then $\varphi_c$ is regular on $\B$, continuous on $\overline{\B}$, and its maximum modulus is
$$\max_{|q|=1}|\varphi_c(q)|=\frac{2c}{1+a}.$$
Moreover, its power series expansion is obtained multiplying by $c$ the one of $\varphi$, so with the same calculation that we have done for $\varphi$, we get that
$$c\sum_{n\geq 0}|q^nb_n|> \frac{2c}{1+a}$$
if and only if
$$|q|>\frac{1}{1+2a}.$$
Hence 
$$c\sum_{n\geq 0}|q_0^nb_n|> \frac{2c}{1+a}.$$
To conclude, notice that we can choose $ c\in (0,1)$ such that
$$c\sum_{n\geq 0}|q_0^nb_n|>1> \frac{2c}{1+a}$$
and set $$g_{q_0}=\varphi_c.$$
\end{proof}


\begin{thebibliography}{10}
\bibitem{A1}L. Aizenberg, {\em Multidimensional analogues of Bohr's theorem on power series},
 Proc. Amer. Math. Soc.  {\bf 128}  (2000), 1147-1155.
\bibitem{A2}L. Aizenberg, A.  Aytuna, P. Djakov, {\em An abstract approach to Bohr's phenomenon},
 Proc. Amer. Math. Soc.  {\bf 128}  (2000), 2611-2619.
\bibitem{cinzia} C. Bisi, G. Gentili, {\em Moebius transformations and the Poincar\'e distance in the quaternionic setting}, Indiana Univ. Math. J., {\bf 58} (2009), 2729-2764.
\bibitem{Boas} H. P. Boas, D. Khavinson, {\em  Bohrs power series theorem in several variables}, Proc. Amer. Math. Soc., {\bf 125} (1997), 2975-2979.
\bibitem{Bohr} H. Bohr, {\em A theorem concerning power series} Proc. London Math. Soc., 
(2) {\bf 13} (1914), 1-5.
\bibitem{kernel} F. Colombo, G. Gentili, I. Sabadini, {\em  A Cauchy kernel for slice regular functions}, Ann. Global Anal. Geom. {\bf 37} (2010), 361-378.
\bibitem{pompeiu} F. Colombo, I. Sabadini, D.C. Struppa, {\em The Pompeiu formula for slice hyperholomorphic functions}, Michigan Math. J., {\bf 60} (2011), 163-170.
\bibitem{monogenic} F. Colombo, I. Sabadini, D.C. Struppa,
{\em Slice monogenic functions},
Israel Journal of Mathematics , {\bf 171} (2009), 385-403.
\bibitem{libro daniele} F. Colombo, I. Sabadini, F. Sommen, D.C. Struppa, {\em Analysis of Dirac Systems and Computational Algebra}, Birkh\"auser, 2004.
\bibitem{libro2} F.Colombo, I. Sabadini, D. C. Struppa, {\em Noncommutative Functional Calculus. Theory and Applications of Slice Hyperholomorphic Functions},
Progress in Mathematics, Birkhauser, Basel, 2011.
\bibitem{ext}F. Colombo, G. Gentili, I. Sabadini, D. Struppa,  {\em Extension results for slice regular functions of a quaternionic variable}, Adv. Math., {\bf 222} (2009), 1793-1808. 
\bibitem{Cullen} C. G. Cullen, {\em An integral theorem for analytic intrinsic functions on quaternions}, Duke Math. J. {\bf 32} (1965), 139-148.
\bibitem{Djakov} P. B. Djakov, M. S. Ramanujan, {\em A remark on Bohr's theorem and its generalizations},  J. Anal., {\bf 8 } (2000), 65-77.
\bibitem{fueter1} R. Fueter, {\em Die Funktionentheorie der Differentialgleichungen $\Delta u = 0$ und $\Delta \Delta u = 0$ mit vier reellen Variablen}, Comm. Math. Helv. {\bf 7} (1934),
307-330.
\bibitem{fueter2} R. Fueter, {\em \"Uber Hartogs'schen Satz},
Comm. Math. Helv. {\bf 12} (1939), 75-80.
\bibitem{open}G. Gentili, C. Stoppato {\em The open mapping theorem for regular quaternionic functions}, Ann. Sc. Norm. Super. Pisa Cl. Sci. (5), {\bf 8} (2009), 805-815. 
\bibitem{zeri}G. Gentili, C. Stoppato {\em Zeros of regular functions and polynomials of a quaternionic variable}, Michigan Math. J., {\bf 56} (2008), 655-667. 
\bibitem{power}G. Gentili, C. Stoppato, {\em Power series and analyticity over the quaternions}, Math. Ann., online first, (2011).
\bibitem{open2}G. Gentili, C. Stoppato, {\em The zero sets of slice regular functions and the open mapping theorem}, Hypercomplex Analysis and Applications, I. Sabadini and F. Sommen eds., Trends in Mathematics, Birkhauser, Basel (2011), 95-107. 
\bibitem{G.S.}G. Gentili, D. C. Struppa, {\em A new approach to Cullen-regular functions of a quaternionic variable}, C. R., Acad. Sci. Paris, Ser. I, {\bf 342}  
(2006), 741-744. 
\bibitem{GSV} G. Gentili, D.C. Struppa, F.Vlacci, {\em The
fundamental theorem of 
algebra for Hamilton and Cayley numbers}, Math. Z., {\bf 259} (2008), 895--902.
\bibitem{GSAdvances} G. Gentili, D. C. Struppa, {\em A new theory of regular function of a quaternionic variable}, Adv. Math., {\bf 216} (2007), 279-301. 
\bibitem{GM} K. G\"urlebeck, J. Morais, {\em Bohr type theorem for monogenic power series}, Comput. Methods Funct. Theory, {\bf 9} (2009), 633-651.
\bibitem{GM1} K. G\"urlebeck, J. Morais, {\em On the development of Bohr's phenomen in the context of Quaternionic analysis and related problems},  Proc. 17th ICFIDCAA (2009), Ho Chi Minh City.
\bibitem{GM2}K. G\"urlebeck, J. Morais, {\em On mapping properties of monogenic functions}, Cubo, {\bf 11} (2009), 73-100.
\bibitem{Kglu}H. T. Kaptano\u glu, {\em Bohr Phenomena for Laplace-Beltrami Operators}, Indag. Math. {\bf 17} (2006), 407-423.
\bibitem{Lam} T. Y. Lam, {\em A first course in noncommutative ring}, Graduate Texts in Mathematics, Vol. 131 Springer-Verlag, New York (1991).
\bibitem{M}J. Morais, {\em Approximation by homogeneous polynomial solutions of the Riesz system in $\rr^3$}, Ph.D. Thesis, Bauhaus-Univesit\"at Weimar, (2009).
\bibitem{P} V. I. Paulsen, G. Popescu, D. Singh, {\em On Bohr's inequality}, Proc. London Math. Soc. (3) {\bf 85}  (2002), 493-512.
\bibitem{CaterinaCV}  C. Stoppato, {\em Poles of regular quaternionic functions}, Complex Variables and Elliptic Equations, {\bf 54} (2009), 1001-1018.
\bibitem{regular} C. Stoppato, {\em Regular Moebius transformations of the space of quaternions},  Ann. Glob. Anal. Geom., published online, (2010), DOI 10.1007/s10455-010-9238-9.
\bibitem{singularities} C. Stoppato, {\em Singularities of slice regular functions}, Math. Nachr., to appear, (2011).
\bibitem{Sudbery} A. Sudbery,  {\em Quaternionic analysis}, Math. Proc. Camb. Phil. Soc., {\bf 85} (1979), 199-225 .
\end{thebibliography}
\end{document}